\newcommand{\dt}[1]{\frac{\partial#1}{\partial t}}
\newcommand{\ddt}[1]{\frac{\partial^2 #1}{\partial t^2}}
\newcommand{\Lnorm}[2]{\left(#1, #2\right)}
\newcommand{\Enorm}[2]{a\left(#1, #2\right)}
\newcommand{\llnorm}[1]{\left\lVert #1\right\rVert_{L_2(\Omega)}}
\newcommand{\ilnorm}[1]{\left\lVert #1\right\rVert_{L_\infty(0,T;L_2(\Omega))}}
\newcommand{\Llnorm}[1]{\left\lVert #1\right\rVert_{L_2(0,T;L_2(\Omega))}}
\newcommand{\hnorm}[1]{\left\lVert #1\right\rVert_{H^1(\Omega)}}
\newcommand{\enorm}[1]{\left\lVert #1\right\rVert_{V}}
\newcommand{\gnorm}[1]{\left\lVert #1\right\rVert_{L_2(\Gamma_N)}}
\newcommand{\ignorm}[1]{\left\lVert #1\right\rVert_{L_\infty(0,T;L_2(\Gamma_N))}}
\newcommand{\lgnorm}[1]{\left\lVert #1\right\rVert_{L_2(0,T;L_2(\Gamma_N))}}
\newcommand{\hgnorm}[1]{\left\lVert #1\right\rVert_{H^1(0,T;L_2(\Gamma_N))}}
\newcommand{\sobolevl}[1]{\vertiii{v}_{H^0(\mathcal{E}_h)}}
\newtheorem{lemma}{Lemma}[section]
\newtheorem{theorem}{Theorem}[section]
\newtheorem{corollary}{Corollary}[section]
\numberwithin{equation}{section}
\journal{Journal of Computational and Applied Mathematics}
\begin{document}
	
	\begin{frontmatter}
	\title{Finite Element Approximation and Analysis of a Viscoelastic Scalar Wave
		Equation with Internal Variable Formulations}
	\author[1]{Yongseok Jang\corref{cor1}%
		\fnref{fn1}}
		\ead{jang@cerfacs.fr}   
	\author[2]{Simon Shaw}
	\ead{simon.shaw@brunel.ac.uk}
	\cortext[cor1]{Corresponding author}
	\fntext[fn1]{Jang
		gratefully acknowledges the support of a scholarship from
		Brunel University London.}
	\address[1]{CERFACS, 42 Avenue Gaspard Coriolis, 31057, Toulouse, France.}
	\address[2]{Department of Mathematics, Brunel University London, Uxbridge UB8 3PH, UK.}
\begin{abstract}
\noindent We consider linear scalar wave equations with a hereditary
integral term of the kind used to model viscoelastic solids. 
The kernel in this Volterra integral is a sum of decaying exponentials
(The so-called Maxwell, or Zener model) and this allows the introduction
of one of two types of families of internal variables, each of which evolve
according to an ordinary differential equation (ODE). There is one such ODE
for each decaying exponential, and the introduction of these ODEs means
that the Volterra integral can be removed from the governing equation.
The two types of internal variable are distinguished by whether the
unknown appears in the Volterra integral, or whether its time
derivative appears; we call the resulting problems the
\emph{displacement} and \emph{velocity} forms.
We define fully discrete formulations for each of these forms by
using continuous Galerkin finite element approximations in space and an implicit 
`Crank-Nicolson' type of finite difference method in time. We prove stability
and \textit{a priori} bounds, and using the FEniCS environment, \texttt{https://fenicsproject.org/}
(The FEniCS project version 1.5, Archive of Numerical Software, 3 (100), 9--23, 2015.)
give some numerical results. These bounds do not require Gr\"onwall's
inequality and so can be regarded to be of high quality, allowing confidence in
long time integration without an \textit{a priori} exponential build up
of error. As far as we are aware this is the first
time that these two formulations have been described together
with accompanying proofs of such high quality stability and error bounds.
The extension of the results to vector-valued viscoelasticity problems is 
straightforward and summarised at the end. The numerical results
are reproducible by acquiring the python sources from
\texttt{https://github.com/Yongseok7717}, or by running a custom built docker 
container (instructions are given). 
 
\end{abstract}

\begin{keyword}
viscoelasticity\sep finite element method\sep internal variables\sep \textit{a priori} estimates
\end{keyword}

\end{frontmatter}

\section{Introduction}\label{sec:intro}\noindent
Materials that exhibit both elastic and viscous response to imposed load and/or
deformation are called viscoelastic. Typical examples of such solid materials are
amorphous polymers, soft biotissue, metals at high temperatures and even concrete
\cite{hunter1976mechanics}. The mathematical description of the dynamic response
of these materials uses a momentum balance law to relate external forces, $f$, to
acceleration, $\ddot{u}$, and stress divergence, $\nabla\cdot\ushort\sigma$. A faithful
mathematical model of this physical set-up would require the introduction of a
vector-valued partial differential equation as in elastodynamics (see e.g.
\cite{VE,DGV}) but we restrict ourselves here to a scalar analogue to keep
the exposition as simple as possible (but we use the terminology of solid 
mechanics).

So, with that in mind, once boundary and initial data are specified, the physical
problem is exemplified by the following mathematical model: find 
$u\colon[0,T]\times\Omega\to\mathbb{R}$ such that
\begin{eqnarray}
\rho \ddot{ u}-\nabla\cdot\ushort\sigma=f&\textrm{in}& (0,T]\times\Omega,
\label{eq:momeq}
\\
u=0&\textrm{on}&[0,T]\times\Gamma_D,
\\
\label{eq:BCdisp}
\ushort\sigma\cdot\ushort n=  g_N&\textrm{on}&[0,T]\times\Gamma_N,
\label{eq:BCstress}
\\
u=u_0&\textrm{on}&\{0\}\times\Omega,
\label{eq:ICdisp}
\\
\dot{ u}=w_0&\textrm{on}&\{0\}\times\Omega,\label{eq:initial:velo}
\end{eqnarray}
where we refer to $\ushort\sigma$ as the stress (and describe it more fully below);
$\Omega$ is an open bounded polytopic domain in $\mathbb{R}^d$ with
constant mass density $\rho$; $\Gamma_D$ and $\Gamma_N$ are the `Dirichlet' and `Neumann' boundaries and $T>0$ is a final time.
As usual $\Gamma_D$ and $\Gamma_N$ are disjoint and we will assume
that the surface measure of $\Gamma_D$ is strictly positive. Note that we use overdots to denote
time differentiation so that $\dot{u}:=\dt{u}$ and $\ddot{u}:=\ddt{u}$. In classical continuum mechanics, the physical model is defined with a displacement vector so the stress is a second order tensor and is defined by a constitutive relationship with the strain tensor. Hence, in general, the linear viscoelastic dynamic equation is a vector-valued PDE of which the above is a scalar analogue. However, \eqref{eq:momeq} is not only a scalar analogue but also represents the mathematical model of viscoelastic materials subjected to antiplane shear response. Antiplane strain, for instance, allows us to reduce the second order tensor to a vector so that the viscoelastic antiplane model in 3D can be dealt with by the scalar wave problem in 2D (see e.g. \cite{Barber2004,paulino2001viscoelastic,hoarau2002analysis}).

The viscoelasticity literature contains a large number of \emph{rheological} (spring and dashpot) based phenomenological models
(e.g. the Maxwell, Voigt, Kelvin-Voigt, generalised Maxwell, $\ldots$, models --- see
\cite{VE,FLO} for more details) as well as models based on the fractional calculus, referred to
as `power law' models in \cite{GGa}. The spring and dashpot models are the ones of interest
here because they give rise to stress-strain constitutive laws that can be described by Volterra
kernels of sums of decaying exponentials. This, in turn, makes them much better
suited to numerical approximation than the fractional calculus models in the sense that the
entire solution history need not be stored, and there is no weak singularity in the kernel.
We will return to the first point below, but first recall the form of these constitutive laws
from \cite{VE} in the following two equivalent (integrate by parts) forms:
\begin{align}
\ushort\sigma(t)
=&D\varphi(0)\nabla u(t)-\int^t_0D\varphi_s(t-s)\nabla u(s)ds,
\label{eq:dispconlaw}
\\
=&D\varphi(t)\nabla u(0)+\int^t_0D\varphi(t-s)\nabla \dot u(s)ds,
\label{eq:veloconlaw}
\end{align}
where $D$ is a positive constant, $\varphi(t)$ is a stress relaxation function and $\varphi_s(t-s):=\frac{\partial}{\partial s}\varphi(t-s)$. Now we can complete our model problem by defining the stress relaxation function $\varphi(t)$ and then substituting for $\ushort{\sigma}$
in \eqref{eq:momeq} and \eqref{eq:BCstress}. The generalised Maxwell model for a viscoelastic solid
produces the following stress relaxation function (see e.g. \cite{GGa}),
\begin{equation}
\varphi(t)=\varphi_0+\sum_{q=1}^{N_\varphi}\varphi_qe^{-t/\tau_q},
\label{eq:pronyseries}
\end{equation}
with $N_\varphi\in\mathbb{N}$, positive delay times $\{\tau_q\}^{N_\varphi}_{q=1}$ and 
positive coefficients $\{\varphi_q\}^{N_\varphi}_{q=0}$ which we can assume to be 
normalised so that $\varphi(0)=1$.

The form \eqref{eq:pronyseries} permits us to deal with the Volterra integrals in 
\eqref{eq:dispconlaw} and \eqref{eq:veloconlaw} in a way that avoids any reference to the past
`history' of the solution. 
In this approach, detailed later in \eqref{stress:p1}, \eqref{iv:p1}, \eqref{ode:p1},
\eqref{iv:p2} and \eqref{ode:p2}, the stress $\ushort{\sigma}$ is defined by using,
in place of the Volterra integral, `hidden' or \emph{internal variables}
(e.g.~\cite{VE,johnson}) that
evolve following an ordinary differential equation. Each of \eqref{eq:dispconlaw} and \eqref{eq:veloconlaw} gives rise to different internal variables and so will be considered
separately to give two formulations: the \emph{displacement} form and the \emph{velocity} form.
For each of these we will approximate the solution and the internal variables using the
standard continuous Galerkin Finite Element Method (CGFEM) to discretise
in space, and a second order implicit Crank-Nicolson finite difference method for the
time discretisation.

The plan of the paper is as follows.
In Section~\ref{sec:weakforms} we introduce the displacement form of the problem in Subsection~\ref{subsec:dispform}, and the velocity form in Subsection~\ref{subsec:veloform}.
The fully discrete approximations are then given in Sections~\ref{sec:discdispform}
(displacement form) and~\ref{sec:discveloform} (velocity form)
where we prove stability and \textit{a priori} error estimates. Gr{\"o}nwall's
lemma (e.g.~\cite{gronwall1919note,holte2009discrete}) is not used for these proofs and so
the constants in these bounds do not grow exponentially with time and
we can, therefore, have confidence in these schemes for long-time integration.
The Gr{\"o}nwall lemma allows for some simplification of analysis (see e.g. \cite{DGV,DG,thomee1984galerkin}), and to circumvent it requires some effort.
Here we rely on assumptions that arise from the physical character and properties
of the problem,
and then, to achieve the sharper bounds, our proofs use some long and technical calculations and details. These are sometimes omitted or just sketched out where
it aids presentation of the main ideas and specific arguments.
In Section~\ref{sec:numexp}
we use the FEniCS environment (see \cite{alnaes2015fenics},
\texttt{https://fenicsproject.org/})
to give the results of some numerical experiments, and explain how our software can be
acquired and the results reproduced. We finish in Section~\ref{sec:concs} with some general
comments. We also note that the results herein are presented in expanded form
in \cite{phdthesis}.

In terms of context we note that for the integral form of the quasistatic
(i.e.~where $\rho\ddot{u}$ is neglected) version of this viscoelasticity problem 
estimates that avoid Gr\"onwall's inequality were given in
\cite{SWWb,SWf,RiviereShawWheelerWhiteman03,SWj}.
For the dynamic problem we refer to DG-in-time, and DG-in-space methods in
\cite{Shaw11i,SWg} --- only the first of these avoided the Gr\"onwall lemma, whereas
the second is similar to what we refer to as the `displacement formulation' below.
Of these, \cite{Shaw11i,SWj,SWg} used space-time finite element formulations.
More generally, the integral form of the dynamic problem has been studied widely
in, for example, \cite{GYFa,PTWa}. The contribution of this paper is to give two
formulations of the dynamic problem using internal variables, and give
accompanying stability and error estimates that completely avoid the Gr\"onwall inequality.
As far as we are aware equivalent analyses are not currently available.
In terms of the well-posedness of problems of this type we refer to the well-known
paper \cite{Dafermos70}.

We introduce and use some standard notations so that $L_p(\Omega),H^s(\Omega)$ and $W^s_p(\Omega)$ denote the usual Lebesgue, Hilbert and Sobolev spaces. For any Banach space $X$, $\lVert\cdot\rVert_X$ is the $X$ norm. For example, $\llnorm{\cdot}$ is the $L_2$ norm induced by the $L_2$ inner product which we denote by $(\cdot,\cdot)$ for the entire domain but for $S\subset\bar\Omega$, $(\cdot,\cdot)_{L_2(S)}$ is the $L_2$ inner product over $S$. In the case of time dependent functions, we expand this notation such that if $f\in L_p(0,T;X)$ for some Banach space $X$, we define
$\lVert f\rVert_{L_p(0,t_0;X)}$ to be the $L_p(0,t_0)$ norm of 
$\lVert f(t)\rVert_{X}$.
We also use the same notation for vector valued functions in Section~\ref{sec:concs}.
Lastly in this section, and for use later, we recall the trace inequality,
\begin{align}
\lVert v\rVert_{L_2(\partial\Omega)}&\leq C\hnorm {v} \textrm{ for any }v\in H^1(\Omega),
\label{cgtrace}
\end{align}
where $C$ is a positive constant depending only on $\Omega$ and its boundary $\partial\Omega$.

\section{Weak formulations}\label{sec:weakforms}\noindent
Our first step is to define the test space $V$,
\begin{align*}
V=\left\{v\in H^1(\Omega)\ |\ v=0\ \textrm{ on } {\Gamma_D}\right\},
\end{align*}
and then, multiplying \eqref{eq:momeq} by $v\in V$, integrating
by parts and using the boundary data gives, in a standard way, that
\begin{align}
\Lnorm{\rho\ddot u(t)}{v}
+(\sigma(t),\nabla v)
=F_d(t;v)\label{weakform}
\end{align}
for all $v\in V$ where the time dependent linear form $F_d$ is defined by
\begin{equation}
F_d(t;v)=\Lnorm{f(t)}{v}+(g_N(t),v)_{L_2(\Gamma_N)}.
\label{eq:linear_form}
\end{equation}
We now need to substitute for the stress using either the displacement or velocity forms.

\subsection{Displacement form}\label{subsec:dispform}\noindent
Recalling \eqref{eq:dispconlaw} and \eqref{eq:pronyseries} we write
\begin{eqnarray}
\ushort\sigma(t)=D\nabla \left(u(t)-\sum_{{q}=1}^{N_\varphi}\psi_q(t)\right),
\label{stress:p1}
\end{eqnarray}
where, for $1\leq {q}\leq N_\varphi$, the internal variables are defined by
\begin{eqnarray}
\psi_{q}(t):=\frac{\varphi_{q}}{\tau_{q}}\int^t_0e^{-(t-s)/\tau_{q}}u(s)\ ds,
\label{iv:p1}
\end{eqnarray}
and satisfy the following ODEs,
\begin{align}\label{ode:p1}
\dot\psi_{q}(t)=\frac{\varphi_{q}}{\tau_{q}}u(t)-\frac{1}{\tau_{q}}\psi_{q}(t)\qquad\textrm{for }q=1,2,\ldots,N_\varphi,
\end{align}
with $\psi_q(0)=0$. Our weak formulation \eqref{weakform} can now be written as
\begin{align}
\Lnorm{\rho\ddot u(t)}{v}+a(u(t),v)-\sum_{q=1}^{N_\varphi}a(\psi_{q}(t),v)
=F_d(t;v)
\qquad\forall v\in V.
\label{weakp1}
\end{align}
In this the symmetric bilinear form $a\colon V\times V\to\mathbb{R}$ is defined by
$a(w,v)=\Lnorm{D\nabla w} {\nabla v}$ and is easily shown to be continuous on $V$. 
Moreover, it follows from our assumption on $\Gamma_D$ that it is also coercive on $V$,
\cite{graser2015note}, and so the \emph{energy norm} defined by $\Vert v\Vert_V := \sqrt{a(v,v)}$,
for $v\in V$, satisfies $\kappa\hnorm v^2\leq\enorm v^2\leq D\hnorm v^2$ for a positive constant
$\kappa$. Thus $(V,a(\cdot,\cdot))$ is a Hilbert space equivalent
to $(H^1(\Omega),(\cdot,\cdot)_{H^1(\Omega)})$. 

We use this bilinear form, or energy inner product, to enforce each internal variable ODE,
and then arrive at the following weak problem.

\textbf{(P1)}
Find $(0,T]\to V$ maps $u$, $\psi_{1}$, $\psi_{2}, \ldots, \psi_{N_\varphi}$ such that
\begin{alignat}{2}
\Lnorm{\rho\ddot u(t)}{v}+a(u(t),v)-\sum_{q=1}^{N_\varphi}a(\psi_{q}(t),v)
&=F_d(t;v)\qquad
&&\forall v\in V,\label{weak:p1e1}
\\
\tau_{q}a(\dot\psi_{q}(t),v)+a(\psi_{q}(t),v)
&=\varphi_{q}a(u(t),v) \qquad
&&\forall v\in V,\ q=1,\ldots,N_\varphi \label{weak:p1e2}
\end{alignat}
with $u(0)=u_0,$ $\dot u(0)=w_0$ and $\psi_{q}(0)=0, \ \forall {q}\in\{1,\ldots,N_\varphi\}$.

\subsection{Velocity form}\label{subsec:veloform}\noindent
On the other hand, using \eqref{eq:veloconlaw} and \eqref{eq:pronyseries}
with the velocity form of internal variable given by
\begin{equation}
\zeta_{q}(t)=\int^t_0\varphi_{q}e^{-(t-s)/\tau_{q}} \dot u(s)\ ds,
\label{iv:p2}
\end{equation}
for each $q=1,\ldots,N_\varphi$, we have
\begin{equation}
\dot\zeta_{q}(t)+\frac{1}{\tau_{q}}\zeta_{q}(t) =\varphi_{q}\dot u(t)\qquad\textrm{for }q=1,2,\ldots,N_\varphi,
\label{ode:p2}
\end{equation}
with $\zeta_{q}(0)=0$. Noticing that
$\psi_{q}(t)=\varphi_{q} u(t)-\varphi_{q}e^{-t/\tau_{q}}u_0-\zeta_{q}(t)$
(integrate by parts)
and recalling that $\varphi(0) = 1$ we can observe that 
\[
u(t)-\sum_{{q}=1}^{N_\varphi}\psi_{q}(t)
=
\varphi_0 u(t)+
\sum_{{q}=1}^{N_\varphi}\Big(
\varphi_qe^{-t/\tau_{q}}u_0+\zeta_{q}(t)
\Big).
\]
Using this in \eqref{stress:p1}, substituting the result into \eqref{weakform}
and incorporating \eqref{ode:p2}, gives the weak problem for the velocity form.

\textbf{(P2)} Find $(0,T]\to V$ maps $u$, $\zeta_{1}$, $\zeta_{2},\ldots, \zeta_{N_\varphi}$
such that
\begin{alignat}{1}
\Lnorm{\rho\ddot u(t)}{v}+\varphi_0a(u(t),v)+\sum_{q=1}^{N_\varphi}a(\zeta_{q}(t),v)
=F_v(t;v) \qquad
&\forall v\in V,\label{weak:p2e1}
\\
\tau_{q}a(\dot\zeta_{q}(t),v)+a(\zeta_{q}(t),v)
=\tau_q\varphi_{q}a(\dot u(t),v)\qquad
&\forall v\in V,\ q=1,\ldots, N_\varphi,\label{weak:p2e2}
\end{alignat}
with $u(0)=u_0,$ $\dot u(0)=w_0$, $\zeta_{q}(0)=0, \ \forall {q}$
and $F_v(t;v)= F_d(t;v)
-\sum\limits_{{q}=1}^{N_\varphi}\varphi_qe^{-t/\tau_{q}}a(u_0,v)$.

We now move to the fully discrete schemes for \textbf{(P1)} in the next section
and for \textbf{(P2)} in Section~\ref{sec:discveloform}.

\section{Fully discrete formulation: Displacement form}
\label{sec:discdispform}\noindent
Let $V^h \subset V$ be a conforming finite element space built with 
continuous piecewise Lagrange basis functions with respect to an underlying 
quasi-uniform mesh with mesh-size characterised by $h$.
We write $t_n=n\Delta t$ with time step $\Delta t = T/N$ for
$N\in\mathbb{N}$, and denote the fully discrete approximations to
$u$ and $\dot{u}$ by $u(t_n)=u^n\approx Z_h^n\in V^h$ and
$\dot{u}(t_n)=\dot{u}^n\approx W_h^n\in V^h$. 
Furthermore, we will use the following approximations, 
\[
\frac{\ddot u(t_{n+1})+\ddot u(t_{n})}{2}\approx\frac{W^{n+1}_h-W^{n}_h}{\Delta t}
\qquad\textrm{ and }\qquad
\frac{ u(t_{n+1})+ u(t_{n})}{2}\approx\frac{Z^{n+1}_h+Z^n_h}{2},
\]
and will impose the relation
\begin{equation}\label{r1}
  \frac{W^{n+1}_h+W^{n}_h}{2}=\frac{Z^{n+1}_h-Z^n_h}{\Delta t}  
\end{equation}
in our fully discrete schemes.

Our fully discrete formulation for \textbf{(P1)} is:

$\mathrm{(\mathbf{P1})}^h$
Find
$Z^n_h$, $W^n_h$, ${\Psi}_{h1}^n$, ${\Psi}_{h2}^n, \ldots,$${\Psi}_{hN_\varphi}^n\in V^h$
for
$n=0,\ldots,N$ such that \eqref{r1} holds along with:
\begin{align}
\Lnorm{\rho\frac{W^{n+1}_h-W^{n}_h}{\Delta t}}{v}+a\left(\frac{Z_h^{n+1}+Z_h^{n}}{2},v\right)
- &\sum\limits_{q=1}^{N_\varphi}a\left(\frac{{\Psi}_{hq}^{n+1}+{\Psi}_{hq}^{n}}{2},v\right)
\nonumber\\
& =\frac{F_d(t_{n+1};v)+F_d(t_{n};v)}{2},\label{p1f1}
\\
\tau_{q}a\left(\frac{{\Psi}_{hq}^{n+1}-{\Psi}_{hq}^{n}}{\Delta t},v\right)+a\left(\frac{{\Psi}_{hq}^{n+1}+{\Psi}_{hq}^{n}}{2},v\right)
& =\varphi_qa\left(\frac{Z_h^{n+1}+Z_h^{n}}{2},v\right) \textrm{ for each }{q},
\label{p1f2}
\\
a(Z_h^0,{v})
& = a({u_0},{v}) ,\label{p1f3}\\
\Lnorm{W^0_h}{v} & =\Lnorm{w_0}{v},\label{p1f4}\\
{\Psi}_{hq}^0
& =0\qquad\textrm{for each } q,
\label{p1f5}
\end{align}
each for all $v\in V^h$.
It follows immediately that 
$\enorm{Z^0_h}\leq\enorm{u_0}$ and $\textrm{and}\llnorm{W^0_h}\leq\llnorm{w_0}$, and we 
have the following stability estimate.


\begin{theorem}\label{thm:fully:stability:p1} 
Suppose $f\in C(0,T;L_2(\Omega))$, $g_N\in H^1(0,T;L_2(\Gamma_N))\cap C(0,T;L_2(\Gamma_N))$ and \linebreak $u_0\in H^1(\Omega)$, then	
$\mathrm{(\mathbf{P1})}^h$ has a unique
solution. Moreover, there exists a positive constant $C$ depending on 
$\Omega,\ \partial\Omega$ the sets $\{\varphi_q\}_{q=0}^{N_\varphi}$ and $\{\tau_q\}_{q=1}^{N_\varphi}$, but independent of $h$, $\Delta t$, $T$ and
the exact and numerical solutions, such that
\begin{multline*}
\frac{\rho}{2}\max_{0\leq n\leq N}\llnorm{W^{n}_h}^2+\frac{\varphi_0}{4}\max_{0\leq n\leq N}\enorm{Z^{n}_h}^2
\\
+\sum_{q=1}^{N_\varphi}\frac{\varphi_0}{2\varphi_q^2N_\varphi+\varphi_q\varphi_0}\max_{0\leq n\leq N}\enorm{{\Psi}^{n}_{hq}}^2
+\sum_{q=1}^{N_\varphi}\sum_{n=0}^{N-1}\frac{2\tau_q}{\Delta t\varphi_q}\enorm{{\Psi}^{n+1}_{hq}-{\Psi}^{n}_{hq}}^2
\\ \leq
CT^2\bigg(\llnorm{w_0}^2+\enorm{u_0}^2+\ilnorm{f}^2+\hgnorm{g_N}^2\bigg).
\end{multline*}
\end{theorem}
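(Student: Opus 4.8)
The plan is to establish a discrete energy identity that telescopes in time and then to certify coercivity of the resulting energy \emph{without} Gr\"onwall, by exploiting the positivity $\varphi_0 = 1 - \sum_{q} \varphi_q > 0$ that follows from $\varphi(0)=1$ and the positivity of the Prony coefficients. Existence and uniqueness come first and cheaply: for each $n$, $\mathrm{(\mathbf{P1})}^h$ together with \eqref{r1} is a square linear system on the finite-dimensional space $V^h$ for $(Z^{n+1}_h, W^{n+1}_h, \{\Psi^{n+1}_{hq}\}_q)$, so it suffices to show the homogeneous system (zero data, zero previous step) has only the trivial solution; this is exactly the energy estimate applied in the homogeneous case, where coercivity of $a$ forces every unknown to vanish. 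Hence the system matrix is nonsingular and the scheme has a unique solution.

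For stability, the central step is to test \eqref{p1f1} with $v = \frac{Z^{n+1}_h - Z^n_h}{\Delta t}$, which by \eqref{r1} equals $\frac{W^{n+1}_h+W^n_h}{2}$: the inertial term then telescopes into the kinetic-energy difference $\frac{\rho}{2\Delta t}(\llnorm{W^{n+1}_h}^2 - \llnorm{W^n_h}^2)$ and the stiffness term into the potential-energy difference $\frac{1}{2\Delta t}(\enorm{Z^{n+1}_h}^2 - \enorm{Z^n_h}^2)$. Simultaneously I test \eqref{p1f2} with $v = \frac{1}{\varphi_q}\frac{\Psi^{n+1}_{hq}-\Psi^n_{hq}}{\Delta t}$ and sum over $q$, producing the nonnegative dissipation $\sum_q \frac{\tau_q}{\varphi_q \Delta t^2}\enorm{\Psi^{n+1}_{hq}-\Psi^n_{hq}}^2$ and the telescoping internal energy $\sum_q \frac{1}{2\varphi_q\Delta t}(\enorm{\Psi^{n+1}_{hq}}^2 - \enorm{\Psi^n_{hq}}^2)$. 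The two coupling contributions are then combined with an overall factor (chosen so their coefficients agree, which is what fixes the constant $\tfrac{2\tau_q}{\Delta t\varphi_q}$ on the dissipation) through the symmetric discrete product rule $a(\Psi^{n+1}_{hq},Z^{n+1}_h)-a(\Psi^n_{hq},Z^n_h) = \tfrac12\big[a(\Psi^{n+1}_{hq}+\Psi^n_{hq},\,Z^{n+1}_h-Z^n_h) + a(\Psi^{n+1}_{hq}-\Psi^n_{hq},\,Z^{n+1}_h+Z^n_h)\big]$, valid since $a$ is symmetric. After multiplying by $\Delta t$ and summing $n=0,\dots,m-1$, the coupling collapses to the single boundary-in-time term $-2\sum_q a(\Psi^m_{hq},Z^m_h)$ (the $n=0$ contribution vanishing because $\Psi^0_{hq}=0$), and everything else telescopes.

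At this point I have, for each $m$, an identity whose left-hand side carries $\rho\llnorm{W^m_h}^2$, $\enorm{Z^m_h}^2$, the accumulated dissipation, and $\sum_q \frac{1}{\varphi_q}\enorm{\Psi^m_{hq}}^2$, minus the indefinite term $2\sum_q a(\Psi^m_{hq},Z^m_h)$. The crux is to absorb this indefinite term by Young's inequality, $2|a(\Psi^m_{hq},Z^m_h)| \le \beta_q \enorm{\Psi^m_{hq}}^2 + \beta_q^{-1}\enorm{Z^m_h}^2$, while keeping strictly positive coefficients on both $\enorm{Z^m_h}^2$ and each $\enorm{\Psi^m_{hq}}^2$. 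Feasibility requires $\beta_q < \varphi_q^{-1}$ for the internal energies and $\sum_q \beta_q^{-1} < 1$ for the displacement energy; since $\beta_q < \varphi_q^{-1}$ forces $\sum_q\beta_q^{-1} > \sum_q \varphi_q = 1-\varphi_0$, both can hold \emph{precisely when} $\varphi_0 > 0$. This is the exact point at which the physical positivity of the relaxation function replaces Gr\"onwall's lemma. Taking the explicit choice $\beta_q = 2N_\varphi/(2\varphi_q N_\varphi + \varphi_0)$ leaves the internal coefficient $\frac{1}{\varphi_q}-\beta_q = \varphi_0/(2\varphi_q^2 N_\varphi + \varphi_q\varphi_0)$ and a residual $\frac{\varphi_0}{2}\enorm{Z^m_h}^2$, matching the statement except for the displacement term, which is further halved by the boundary-data estimate below.

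It remains to bound the forcing without reintroducing Gr\"onwall. The $f$-contribution is paired with the velocity averages and estimated by Cauchy--Schwarz in time, $\sum_n \Delta t\,\llnorm{f}\,\llnorm{W^n_h} \le T\,\ilnorm{f}\max_n\llnorm{W^n_h}$, and then Young, which yields $C T^2\ilnorm{f}^2$ and absorbs $\tfrac{\rho}{2}\max_n\llnorm{W^n_h}^2$ into the left (this is why only $\tfrac{\rho}{2}$ survives). The $g_N$-contribution is first rewritten by summation by parts in time so that it pairs with $Z^m_h$ and with a difference quotient of $g_N$ rather than with a velocity trace; the trace inequality \eqref{cgtrace} then converts the boundary term $(g_N(t_m),Z^m_h)_{L_2(\Gamma_N)}$ into $\frac{\varphi_0}{4}\enorm{Z^m_h}^2 + C\gnorm{g_N(t_m)}^2$ (absorbing the first piece and reducing the displacement coefficient to the stated $\varphi_0/4$), while the difference-quotient sum contributes $\hgnorm{g_N}$ through $\partial_t g_N$, explaining the $H^1(0,T;L_2(\Gamma_N))$ hypothesis; Cauchy--Schwarz in time again supplies the $T^2$. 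Finally, choosing $m$ to realise each maximum and collecting constants gives the stated bound. The hardest part is the memory coupling: engineering the cross terms so they telescope and then absorbing the residue with strictly positive energy coefficients --- feasible exactly because $\varphi_0>0$ --- which is the whole point of the Gr\"onwall-free argument and forces the delicate Young weighting $\beta_q$.
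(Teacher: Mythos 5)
Your proposal is correct and follows essentially the same route as the paper's proof: the same test functions (up to harmless scalar factors), the same symmetric product-rule identity that collapses the memory coupling to the single end-point term $2\sum_q a(\Psi^m_{hq},Z^m_h)$, the same Young absorption with weights equivalent to the paper's choice $\epsilon_q=\beta_q^{-1}=\varphi_q+\varphi_0/(2N_\varphi)$ exploiting $\varphi_0>0$, and the same summation-by-parts/trace-inequality treatment of the Neumann data producing the $\hgnorm{g_N}$ term.
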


\begin{proof}
The existence and uniqueness follows from the stated bound, so we have
only to establish that.
Choose $m\in\mathbb{N}$ such that $1\leq m\leq N$.
Put $v=\Delta t(W^{n+1}_h+W^n_h)$ for $0\leq n\leq m-1$ into \eqref{p1f1},
then $v=2({\Psi}^{n+1}_{hq}-{\Psi}^n_{hq})/\varphi_q$
into \eqref{p1f2} and sum over $q$. Then add all these results and sum over
$n=0$ to $n=m-1$. After noting that,
\begin{align*}
\frac{\Delta t}{2}&\Enorm{\Psi_{hq}^{n+1}+\Psi_{hq}^n}{W^{n+1}_h+W^n_h}+\Enorm{\Psi_{hq}^{n+1}-\Psi_{hq}^n}{Z^{n+1}_h+Z^n_h}
\\
& = 2\Enorm{\Psi_{hq}^{n+1}}{Z_h^{n+1}}-2\Enorm{\Psi_{hq}^n}{Z_h^n},
\end{align*}
we obtain,
\begin{align}
{\rho}&\llnorm{W^{m}_h}^2+\enorm{Z^{m}_h}^2+\sum_{q=1}^{N_\varphi}\frac{1}{\varphi_q}\enorm{{\Psi}^{m}_{hq}}^2+\sum_{q=1}^{N_\varphi}\sum_{n=0}^{m-1}\frac{2\tau_q}{\Delta t\varphi_q}\enorm{{\Psi}^{n+1}_{hq}-{\Psi}^{n}_{hq}}^2
\nonumber\\
=&\rho\llnorm{W^0_h}^2+\enorm{Z^{0}_h}^2+\frac{\Delta t}{2}\sum_{n=0}^{m-1}\bigg(F_d\left(t_{n+1};W^{n+1}_h+W^n_h\right)+F_d\left(t_{n};W^{n+1}_h+W^n_h\right)\bigg)
\nonumber\\
&+\sum_{q=1}^{N_\varphi}2a(Z^{m}_h,{\Psi}^{m}_{hq}),
\label{eq:P1stabproof_1}
\end{align}
We first consider the third term on the right where, from
\eqref{r1} and the definition, \eqref{eq:linear_form}, 
\begin{align*}
    \frac{\Delta t}{2}&\sum_{n=0}^{m-1}\bigg(
    F_d(t_{n+1};W^{n+1}_h+W^n_h)+F_d(t_{n};W^{n+1}_h+W^n_h)
    \bigg)
    \\
    =&\frac{\Delta t}{2}\sum_{n=0}^{m-1}\Lnorm{f(t_{n+1})+f(t_{n})}{W^{n+1}_h+W^n_h}+\sum_{n=0}^{m-1}\left(g_N(t_{n+1})+g_N(t_{n}),Z^{n+1}_h-Z^{n}_h\right)_{L_2(\Gamma_N)}.
\end{align*}
We sum by parts in the second term to introduce the difference 
$g_N(t_{n+1})-g_N(t_{n})$, and replace this with the integral of $\dot{g}_N$ over
the time step. We estimate the remaining terms in a standard way using
the trace, Cauchy Schwarz and Young's inequalities
for positive $\epsilon_a$ $\epsilon_b$ and $\epsilon_c$,
and it follows that
\begin{align}
\bigg|\frac{\Delta t}{2}&\sum_{n=0}^{m-1}(F_d(t_{n+1};W^{n+1}_h+W^n_h)+F_d(t_{n};W^{n+1}_h+W^n_h))\bigg|
\nonumber\\
\nonumber\leq&\frac{2\Delta t}{\epsilon_a}\sum_{n=0}^{N}\llnorm{f(t_{n})}^2+{2(T+\Delta t)\epsilon_a}\max_{0\leq n\leq N}\llnorm{W^{n}_h}^2\\
&\nonumber+\bigg(\frac{C}{\epsilon_b}+1\bigg)\max_{0\leq n\leq N}\gnorm{g_N(t_{n})}^2+{C\epsilon_b}\max_{0\leq n\leq N}\enorm{Z^{n}_h}^2+{C}\enorm{Z^{0}_h}^2\\
&+\frac{C}{\epsilon_c}\lgnorm{\dot g_N}^2+{2C(T+\Delta t)\epsilon_c}\max_{0\leq n\leq N}\enorm{Z^{n}_h}^2\label{pf:sta:p1:eq2}.
\end{align}
Secondly, for the fourth term on the right of \eqref{eq:P1stabproof_1},
we get from the Cauchy-Schwarz and Young's inequalities that 
\[
    \sum_{q=1}^{N_\varphi}2a(Z^{m}_h,{\Psi}^{m}_{hq})
    \leq\sum_{q=1}^{N_\varphi}\epsilon_q\enorm{Z^m_h}^2+\sum_{q=1}^{N_\varphi}\frac{1}{\epsilon_q}\enorm{{\Psi}^m_{hq}}^2,
\]
for any $\epsilon_q>0$ for each $q$. 
Returning to \eqref{eq:P1stabproof_1} and using 
these estimates results in
\begin{align*}
{\rho}&\llnorm{W^{m}_h}^2
+\left(1-\sum_{q=1}^{N_\varphi}\epsilon_q\right)
\enorm{Z^{m}_h}^2
+\sum_{q=1}^{N_\varphi}
\left(\frac{1}{\varphi_q}-\frac{1}{\epsilon_q}\right)
\enorm{{\Psi}^{m}_{hq}}^2
\\
& +\sum_{q=1}^{N_\varphi}\sum_{n=0}^{m-1}
\frac{2\tau_q}{\Delta t\varphi_q}\enorm{{\Psi}^{n+1}_{hq}-{\Psi}^{n}_{hq}}^2
\leq
\rho\llnorm{W^0_h}^2+(1+C)\enorm{Z^{0}_h}^2
+\frac{2\Delta t}{\epsilon_a}\sum_{n=0}^{N}\llnorm{f(t_{n})}^2
\\
&+{2(T+\Delta t)\epsilon_a}\max_{0\leq n\leq N}\llnorm{W^{n}_h}^2
+\bigg(\frac{C}{\epsilon_b}+1\bigg)\max_{0\leq n\leq N}\gnorm{g_N(t_{n})}^2+{C\epsilon_b}\max_{0\leq n\leq N}\enorm{Z^{n}_h}^2
\\
&+\frac{C}{\epsilon_c}\lgnorm{\dot g_N}^2+{2C(T+\Delta t)\epsilon_c}\max_{0\leq n\leq N}\enorm{Z^{n}_h}^2.
\end{align*}
Next, we recall our sign assumptions on the coefficients in \eqref{eq:pronyseries} and take
$\epsilon_q=\varphi_q+\varphi_0/(2N_\varphi)>0$ for each $q$. Recalling also that
$\varphi(0)=1$ we note that
%
\begin{align*}
1-\sum_{q=1}^{N_\varphi}\epsilon_q
& =
1-\sum_{q=1}^{N_\varphi}\varphi_q-\sum_{q=1}^{N_\varphi}\frac{\varphi_0}{2N_\varphi}
= 1 - (1-\varphi_0) - \frac{\varphi_0}{2}
= \frac{\varphi_0}{2}>0
\\
\text{and }\qquad
\frac{1}{\varphi_q}-\frac{1}{\epsilon_q}
& =
\frac{\varphi_0}{2\varphi_q^2N_\varphi+\varphi_q\varphi_0}>0
\qquad \forall{q}\in\{1,\ldots,N_\varphi\},
\end{align*}
to obtain,
\begin{align*}
{\rho}&\llnorm{W^{m}_h}^2
+\frac{\varphi_0}{2}\enorm{Z^{m}_h}^2
+\sum_{q=1}^{N_\varphi}\frac{\varphi_0}{2\varphi_q^2N_\varphi
+\varphi_q\varphi_0}\enorm{{\Psi}^{m}_{hq}}^2
+\sum_{q=1}^{N_\varphi}\sum_{n=0}^{m-1}
\frac{2\tau_q}{\Delta t\varphi_q}\enorm{{\Psi}^{n+1}_{hq}-{\Psi}^{n}_{hq}}^2
\\
\leq&\rho\llnorm{W^0_h}^2+(1+C)\enorm{Z^{0}_h}^2+\frac{2\Delta t}{\epsilon_a}\sum_{n=0}^{N}\llnorm{f(t_{n})}^2+{2(T+\Delta t)\epsilon_a}\max_{0\leq n\leq N}\llnorm{W^{n}_h}^2\\
&+\bigg(\frac{C}{\epsilon_b}+1\bigg)\max_{0\leq n\leq N}\gnorm{g_N(t_{n})}^2+{C\epsilon_b}\max_{0\leq n\leq N}\enorm{Z^{n}_h}^2\\
&+\frac{C}{\epsilon_c}\lgnorm{\dot g_N}^2+{2C(T+\Delta t)\epsilon_c}\max_{0\leq n\leq N}\enorm{Z^{n}_h}^2.
\end{align*}
From this we can get,
\begin{align*}
{\rho}&\max_{0\leq n\leq N}\llnorm{W^{n}_h}^2+\frac{\varphi_0}{2}\max_{0\leq n\leq N}\enorm{Z^{n}_h}^2+\sum_{q=1}^{N_\varphi}\frac{\varphi_0}{2\varphi_q^2N_\varphi+\varphi_q\varphi_0}\enorm{{\Psi}^{m}_{hq}}^2\\&+\sum_{q=1}^{N_\varphi}\sum_{n=0}^{m-1}\frac{2\tau_q}{\Delta t\varphi_q}\enorm{{\Psi}^{n+1}_{hq}-{\Psi}^{n}_{hq}}^2
\\
\leq&
3\bigg(\rho\llnorm{W^0_h}^2+(1+C)\enorm{Z^{0}_h}^2+\frac{2\Delta t}{\epsilon_a}\sum_{n=0}^{N}\llnorm{f(t_{n})}^2
\\
&+{2(T+\Delta t)\epsilon_a}\max_{0\leq n\leq N}\llnorm{W^{n}_h}^2+\bigg(\frac{C}{\epsilon_b}+1\bigg)\max_{0\leq n\leq N}\gnorm{g_N(t_{n})}^2\\
&+{C\epsilon_b}\max_{0\leq n\leq N}\enorm{Z^{n}_h}^2+\frac{C}{\epsilon_c}\lgnorm{\dot g_N}^2+{2C(T+\Delta t)\epsilon_c}\max_{0\leq n\leq N}\enorm{Z^{n}_h}^2\bigg),
\end{align*}
and then choosing $\epsilon_a=\rho/(12(T+\Delta t))$, $\epsilon_b=\varphi_0/(24C)$ and $\epsilon_c=\varphi_0/(48C(T+\Delta t))$
and recalling that $\llnorm{W_h^0}\leq\llnorm{w_0}$ and 
$\enorm{Z_h^0}\leq\enorm{u_0}$,
we conclude that there is a positive constant $C$ such that
\begin{multline*}
\frac{\rho}{2}\max_{0\leq n\leq N}\llnorm{W^{n}_h}^2
+\frac{\varphi_0}{4}\max_{0\leq n\leq N}\enorm{Z^{n}_h}^2
+\sum_{q=1}^{N_\varphi}
\frac{\varphi_0}{2\varphi_q^2N_\varphi+\varphi_q\varphi_0}\enorm{{\Psi}^{m}_{hq}}^2
\\
+\sum_{q=1}^{N_\varphi}\sum_{n=0}^{m-1}\frac{2\tau_q}{\Delta t\varphi_q}\enorm{{\Psi}^{n+1}_{hq}-{\Psi}^{n}_{hq}}^2
\leq CT\bigg(\llnorm{w_0}^2+\enorm{u_0}^2
\\
+{\Delta t}\sum_{n=0}^{N}\llnorm{f(t_{n})}^2+\lgnorm{\dot g_N}^2
+\max_{0\leq n\leq N}\gnorm{g_N(t_{n})}^2\bigg),
\end{multline*}
for ${0\leq m\leq N-1}$. As $T+\Delta t\leq2T$ the constant $C$ is independent of $h$,
$\Delta t$, $T$ and the exact and numerical solutions, but it depends on the
trace inequality constant and the physical quantities of $\rho$, $D$ and the
internal variables.
Noting that $m$ is arbitrary, recalling the hypotheses on $f$ and $g$,
%
%
and that $\ignorm{g_N}\le C\hgnorm{g_N}$, then completes the proof.
\end{proof}

Notice that the proof of Theorem~\ref{thm:fully:stability:p1} is an
example of how the $e^T$ dependence of the constant that would arise
from using Gr\"onwall's inequality can be avoided for these
viscoelasticity problems. It is noticeable that the proof was rather more involved than would be needed if we used Gr\"onwall. The proofs that 
follow exhibit a similar amount of additional effort.




We now turn to error bounds and begin by defining the elliptic projection $R:V\mapsto V^h$
(e.g. \cite{wheeler}) for $w\in V$ by 
\[
a(Rw,v)=a(w,v),\ \forall v\in V^h,
\]
and note the resulting Galerkin orthogonality such that for any $w\in V$
$$a(w-Rw,v)=0,\ \forall v\in V^h.$$
It follows that $\frac{\partial}{\partial t}(Rw) = R\dot{w}$ if $\dot{w}\in V$. We will make use of the following standard result.

\begin{lemma}\label{lemma:estimate:elliptic}
\textnormal{(see \cite[pg 731-732]{wheeler} and \cite{MTE})}
If $w\in  H^{s_2}(\Omega)\subset V$ and $V^h$ uses
piecewise polynomials of degree $s_1$ in $V$, then
\[
\llnorm{w-Rw}\leq C|w|_{H^r(\Omega)}h^{r-1}
\textrm{ and }\enorm{w-Rw}\leq C|w|_{H^r(\Omega)}h^{r-1}
\]
for some positive constant $C$, and where $r:=\min{(s_1+1,s_2)}$. Furthermore, if there is elliptic regularity, we have
$\llnorm{w-Rw}\leq Ch^{r}|w|_{H^r(\Omega)}$. 
\end{lemma}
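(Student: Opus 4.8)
The plan is to treat this as the classical pair of Ritz/Galerkin-projection estimates and to derive them from three ingredients: the best-approximation property of $R$ in the energy norm, a standard polynomial interpolation (or quasi-interpolation) estimate, and an Aubin--Nitsche duality argument for the sharpened $L_2$ bound. The references cited in the statement establish exactly this, so the goal is to reconstruct the argument rather than to discover anything new.

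First I would establish the energy estimate. Galerkin orthogonality, $a(w-Rw,v)=0$ for all $v\in V^h$, lets me write, for any $v\in V^h$, $\enorm{w-Rw}^2=a(w-Rw,w-Rw)=a(w-Rw,w-v)$ since $Rw-v\in V^h$. Cauchy--Schwarz in the energy inner product then gives $\enorm{w-Rw}\leq\enorm{w-v}$, so $R$ is the best energy-norm approximation: $\enorm{w-Rw}=\inf_{v\in V^h}\enorm{w-v}$. Choosing $v$ to be a Lagrange interpolant (or, where $w$ lacks pointwise values, a Scott--Zhang or Cl\'ement quasi-interpolant) $I_hw$ and invoking the standard bound $\hnorm{w-I_hw}\leq C h^{r-1}|w|_{H^r(\Omega)}$ with $r=\min(s_1+1,s_2)$, together with the norm equivalence $\enorm{\cdot}\leq\sqrt{D}\,\hnorm{\cdot}$ recorded just before \textbf{(P1)}, yields $\enorm{w-Rw}\leq C h^{r-1}|w|_{H^r(\Omega)}$. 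The first $L_2$ bound is then immediate: since $\llnorm{v}\leq\hnorm{v}$ and $\hnorm{v}\leq\kappa^{-1/2}\enorm{v}$, I get $\llnorm{w-Rw}\leq C\enorm{w-Rw}\leq Ch^{r-1}|w|_{H^r(\Omega)}$.

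Second, for the improved $L_2$ estimate I would run the duality argument. Writing $e:=w-Rw$, I introduce the dual problem of finding $\phi\in V$ with $a(\phi,v)=\Lnorm{e}{v}$ for all $v\in V$; because $a$ is symmetric this is the same elliptic operator. Under the assumed elliptic (full $H^2$) regularity the solution obeys $\|\phi\|_{H^2(\Omega)}\leq C\llnorm{e}$. Taking $v=e$ and using Galerkin orthogonality to subtract any element of $V^h$ from $\phi$, I obtain $\llnorm{e}^2=a(\phi,e)=a(e,\phi-I_h\phi)\leq\enorm{e}\,\enorm{\phi-I_h\phi}\leq C h\,\enorm{e}\,\|\phi\|_{H^2(\Omega)}\leq C h\,\enorm{e}\,\llnorm{e}$, where the penultimate step is the $r=2$ interpolation estimate applied to $\phi$. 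Cancelling one factor of $\llnorm{e}$ and inserting the energy bound already obtained gives $\llnorm{e}\leq C h\,\enorm{e}\leq C h^{r}|w|_{H^r(\Omega)}$, as claimed.

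The routine work (precise interpolation constants and the scaling arguments behind them) is entirely standard and I would simply cite it. The only place the hypotheses genuinely bite is the duality step: it needs the elliptic regularity assumption to promote $\phi$ from $V$ to $H^2(\Omega)$, which is exactly why the statement qualifies the $h^r$ rate by \emph{``if there is elliptic regularity''}. A secondary technical point I would watch is the choice of interpolant when $s_2$ is too small for nodal values to be well defined, which is resolved by using an $H^1$-stable quasi-interpolation operator in place of nodal interpolation.
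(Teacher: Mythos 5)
Your proof is correct: the paper offers no proof of this lemma at all, citing it as a standard result from the references (Wheeler; and the elasticity/FEM text), and your reconstruction — Galerkin orthogonality giving best approximation in the energy norm, a (quasi-)interpolation estimate to obtain the $h^{r-1}$ rates, and an Aubin--Nitsche duality argument under elliptic regularity for the improved $L_2$ bound — is precisely the classical argument those references contain. Your attention to the two genuine technical points (elliptic regularity being indispensable for the duality step, and quasi-interpolation when $s_2$ is too small for nodal interpolation) is exactly right.
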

By `elliptic regularity' here we of course mean that, for a constant $C$,
\begin{equation}
\lvert\xi\rvert_{H^2(\Omega)}
\leq C\lVert \Delta\xi\rVert_{L_2(\Omega)},
\label{eq:ellipticRegularity:H2}
\end{equation}
where, following the usual Aubin-Nitsche duality argument, $\xi$ solves an associated
dual problem. This property is known to hold for $\Omega$ either a smooth domain or a convex polytope, and with $\Gamma_N=\emptyset$. See, for example, 
\cite{dauge1988elliptic,grisvard2011elliptic,MTE} where more general boundary conditions are
also considered.


The approach here is standard in that we split the error using the
elliptic projection. To this end, let
\begin{gather*}
    \theta:=u-Ru,\qquad    \chi^n:=Z_h^n-Ru^n,\qquad    \varpi^n:=W_h^n- R\dot u^n,\\
    \vartheta_q:=\psi_q-R\psi_q,\qquad    \varsigma^n_q:={\Psi}_{hq}^n-R\psi_q^n,
\end{gather*}for each $q$. Additionally, we define
\begin{align*}
    e_h^n:=u^n-Z^n_h=\theta^n-\chi^n,\qquad
    \tilde e_h^n:=\dot u^n-W^n_h=\dot\theta^n-\varpi^n,
\end{align*} and also $\psi^n-\Psi^n_{hq}=\vartheta^n_q-\varsigma^n_q$ for each $q$.
\begin{lemma}\label{lemma:estimate:p1}
Suppose $u\in H^4(0,T;H^{s_2}(\Omega))\cap C^1(0,T;H^{s_2}(\Omega))$ then,
using \textnormal{Lemma~\ref{lemma:estimate:elliptic}},
\begin{align*}
        \max_{0\leq k\leq N}\llnorm{\varpi^{k}}&
    +\max_{0\leq k\leq N}\enorm{\chi^{k}}+
    \sum_{q=1}^{N_\varphi}\max_{0\leq k\leq N}\enorm{\varsigma_q^{k}}+\sqrt{{\Delta t}\sum_{n=0}^{N-1}\sum_{q=1}^{N_\varphi}\frac{\tau_q}{\varphi_q}\enorm{\frac{\varsigma_q^{n+1}-\varsigma_q^n}{\Delta t}}^2}
    \\\nonumber
\leq& CT\lVert u\rVert_{H^4(0,T;H^{s_2}(\Omega))}(h^{r-1}+\Delta t^2).
\end{align*}
Furthermore, if we also assume elliptic regularity,
\begin{align*}
        \max_{0\leq k\leq N}&\llnorm{\varpi^{k}}
    +\max_{0\leq k\leq N}\enorm{\chi^{k}}+
    \sum_{q=1}^{N_\varphi}\max_{0\leq k\leq N}\enorm{\varsigma_q^{k}}+\sqrt{{\Delta t}\sum_{n=0}^{N-1}\sum_{q=1}^{N_\varphi}\frac{\tau_q}{\varphi_q}\enorm{\frac{\varsigma_q^{n+1}-\varsigma_q^n}{\Delta t}}^2}
    \\\nonumber
\leq& CT\lVert u\rVert_{H^4(0,T;H^{s_2}(\Omega))}(h^{r}+\Delta t^2).
\end{align*}
Here, $C$ is a positive constant that depends on
 $\Omega$, $\partial \Omega$ and the problem coefficients
 $\rho$, $D$, $\{\varphi_q\}_{q=0}^{N_\varphi}$ and $\{\tau_q\}_{q=1}^{N_\varphi}$, but is independent of $h$, $\Delta t$, $T$, and the exact and numerical solutions.
\end{lemma}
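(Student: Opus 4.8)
The plan is to reuse the elliptic-projection splitting already introduced and to transport the \emph{Gr\"onwall-free} energy argument of Theorem~\ref{thm:fully:stability:p1} from the scheme onto the error. First I would average \eqref{weak:p1e1}--\eqref{weak:p1e2} at $t_{n+1}$ and $t_n$, subtract $\mathrm{(\mathbf{P1})}^h$, and insert $e_h=\theta-\chi$, $\tilde e_h=\dot\theta-\varpi$ and $\psi_q-\Psi_{hq}=\vartheta_q-\varsigma_q$. The defining Galerkin orthogonality of $R$ annihilates every elliptic-projection error $\theta,\vartheta_q$ occurring inside an $a(\cdot,\cdot)$, so the surviving error equations read, for all $v\in V^h$,
\begin{align*}
\Lnorm{\rho\tfrac{\varpi^{n+1}-\varpi^n}{\Delta t}}{v}+a\!\left(\tfrac{\chi^{n+1}+\chi^n}{2},v\right)-\sum_{q}a\!\left(\tfrac{\varsigma_q^{n+1}+\varsigma_q^n}{2},v\right)&=\Lnorm{\rho\big(T_1^n+\tfrac{\dot\theta^{n+1}-\dot\theta^n}{\Delta t}\big)}{v},\\
\tau_q a\!\left(\tfrac{\varsigma_q^{n+1}-\varsigma_q^n}{\Delta t},v\right)+a\!\left(\tfrac{\varsigma_q^{n+1}+\varsigma_q^n}{2},v\right)-\varphi_q a\!\left(\tfrac{\chi^{n+1}+\chi^n}{2},v\right)&=\tau_q a(S_q^n,v),
\end{align*}
where $T_1^n$ and $S_q^n$ denote the Crank--Nicolson truncation errors of $\rho\ddot u$ and of the ODE \eqref{ode:p1}, respectively. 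Note that the \emph{only} projection error not removed by orthogonality is the $L_2$-paired term $\dot\theta$, since in the internal-variable equation the analogous $(I-R)$ contribution is an $a(\cdot,v)$ term and therefore vanishes.

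Next I would apply exactly the test-function recipe of Theorem~\ref{thm:fully:stability:p1}: take $v=\Delta t(\varpi^{n+1}+\varpi^n)$ in the first equation and $v=2(\varsigma_q^{n+1}-\varsigma_q^n)/\varphi_q$ in the second, sum over $q$ and over $n=0,\dots,m-1$, and invoke the same telescoping identity. Here I anticipate the key complication, namely that relation \eqref{r1} is \emph{not} inherited exactly by the error variables: a direct computation gives
\[
\tfrac{\varpi^{n+1}+\varpi^n}{2}-\tfrac{\chi^{n+1}-\chi^n}{\Delta t}=-R T_0^n,\qquad T_0^n:=\tfrac{\dot u^{n+1}+\dot u^n}{2}-\tfrac{u^{n+1}-u^n}{\Delta t},
\]
so the telescoping identity used in the stability proof picks up two extra defect terms proportional to $a(\chi^{n+1}+\chi^n,RT_0^n)$ and $a(\varsigma_q^{n+1}+\varsigma_q^n,RT_0^n)$. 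Using $\chi^0=0$ (because \eqref{p1f3} forces $Z_h^0=Ru_0$) and $\varsigma_q^0=0$, this produces an energy identity whose right-hand side is $\rho\llnorm{\varpi^0}^2+\sum_q 2a(\varsigma_q^m,\chi^m)$ together with four consistency contributions: the time-truncation pairings with $T_1^n$ and with $S_q^n$, the $L_2$ pairing with $\dot\theta$, and the two $RT_0^n$ defects.

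The remaining work is to bound these consistency terms \emph{without} Gr\"onwall, mirroring the stability proof. I would extract $\max_k$ of each error norm from the sums and absorb it by Young's inequality (the positive dissipation $\sum_{n,q}\tfrac{2\tau_q}{\Delta t\varphi_q}\enorm{\varsigma_q^{n+1}-\varsigma_q^n}^2$ on the left absorbs the $S_q^n$ pairing, and the $\epsilon_q$ device handles $2a(\varsigma_q^m,\chi^m)$), with the constant inheriting a factor of $T$ from the same $\Delta t\sum_n\le T$ bookkeeping and $T$-dependent choice of Young parameters as in Theorem~\ref{thm:fully:stability:p1}. The sizes then follow from Taylor's theorem with integral remainder --- $\|T_1^n\|$, $\enorm{T_0^n}$ and $\enorm{S_q^n}$ are all $O(\Delta t^2)$, the $\psi_q$-regularity needed for $S_q^n$ being inherited from $u$ through the linear ODE \eqref{ode:p1} --- together with Lemma~\ref{lemma:estimate:elliptic} applied to $\dot\theta^n=(I-R)\dot u^n$ and to $\varpi^0$. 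Taking square roots converts the squared energy bound into the stated norm bound with overall factor $CT\lVert u\rVert_{H^4(0,T;H^{s_2})}$.

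Finally I would note that the $h^{r-1}$ versus $h^r$ dichotomy is decided entirely by the two projection errors measured in $L_2$, namely $\dot\theta$ (equivalently $(I-R)\ddot u$) and $\varpi^0$: the standard bound of Lemma~\ref{lemma:estimate:elliptic} gives $h^{r-1}$, whereas its improved $L_2$ estimate available under elliptic regularity \eqref{eq:ellipticRegularity:H2} gives $h^r$; every quantity measured in the $V$-norm has either been eliminated by Galerkin orthogonality or is a time truncation carrying no loss of a power of $h$. The main obstacle is thus the careful bookkeeping of the \eqref{r1}-defect terms inside the delicate telescoping identity and their absorption by the Young/max-extraction mechanism, since these are precisely the places where a careless estimate would reintroduce an exponential-in-$T$ constant.
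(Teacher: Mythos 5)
Your proposal is correct and follows essentially the same route as the paper's proof: the elliptic-projection splitting, Galerkin orthogonality annihilating all $a(\cdot,\cdot)$-paired projection errors, the key observation that \eqref{r1} fails for the error variables by exactly the Crank--Nicolson defect $RT_0^n$ (which is the paper's $\mathcal{E}_2^n+\mathcal{E}_3^n$ in \eqref{r2}), the telescoping energy identity whose boundary cross term $\sum_q 2a(\varsigma_q^m,\chi^m)$ is handled by the stability proof's $\epsilon_q$ device, the Gr\"onwall-free max-extraction/Young bookkeeping with $T$-dependent parameters, and the $h^{r-1}$ versus $h^{r}$ dichotomy decided by the $L_2$-measured quantities $\dot\theta$ and $\varpi^0$. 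The only differences are implementation-level: the paper tests the momentum error equation with $(\chi^{n+1}-\chi^n)/\Delta t$ rather than your $\Delta t(\varpi^{n+1}+\varpi^n)$, so the \eqref{r1}-defect lands in the $L_2$-paired terms instead of the $a$-paired ones, and it treats the $S_q^n$ pairing (its $E_q^n$) by Abel summation by parts --- which is why $\dot E_q$ appears in its list of truncation terms --- whereas you absorb that pairing directly into the dissipation term via Young's inequality, which also yields the optimal $O(\Delta t^4)$ contribution and is, if anything, slightly simpler.
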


\begin{proof}
Averaging \eqref{weak:p1e1} at $t_{n+1}$ and $t_n$ and subtracting
the result from \eqref{p1f1} gives,
\begin{align*}
&\Lnorm{\frac{\rho}{2}(\ddot u^{n+1}+\ddot u^n)
-\frac{\rho}{\Delta t}(W_h^{n+1}-W_h^n)}{v}
+\frac{1}{2}\Enorm{(u^{n+1}+u^n)-(Z_h^{n+1}+Z_h^n)}{v}
\\
&\qquad
-\frac{1}{2}\sum_{q=1}^{N_\varphi}
\Enorm{(\psi_q^{n+1}+\psi_q^n)-({\Psi}_{hq}^{n+1}+{\Psi}_{hq}^n)}{v}
=0
\end{align*}
for any $v\in V^h$.
Using Galerkin orthogonality, we can rewrite this as
\begin{align}
\frac{\rho}{\Delta t}&\Lnorm{\varpi^{n+1}-\varpi^n}{v}
+\frac{1}{2}\Enorm{\chi^{n+1}+\chi^n}{v}
-\frac{1}{2}\sum_{q=1}^{N_\varphi}
\Enorm{\varsigma_q^{n+1}+\varsigma_q^n}{v}\nonumber
\\
\qquad &=
\frac{\rho}{\Delta t}\Lnorm{\dot\theta^{n+1}-\dot\theta^n}{v}+\rho\Lnorm{\mathcal{E}^n_1}{v},
\label{p1fe0}
\end{align}
for $0\leq n\leq N-1$, where
\[\mathcal{E}_1(t):=\frac{\ddot u{(t+\Delta t)}+\ddot u(t)}{2}-\frac{\dot u{(t+\Delta t)}-\dot u(t)}{\Delta t}.\]
Note that by \eqref{r1} we have,
\begin{equation}
    \frac{\chi^{n+1}-\chi^{n}}{\Delta t}=\frac{Z^{n+1}-Z^{n}}{\Delta t}
    -\frac{Ru^{n+1}-Ru^{n}}{\Delta t}
    =\frac{\varpi^{n+1}+\varpi^{n}}{2}-\mathcal{E}_2^n-\mathcal{E}_3^n
    \label{r2},
\end{equation}
where
\begin{align*}
\mathcal{E}_2(t)
& :=\frac{\dot \theta(t+\Delta t)+\dot \theta(t)}{2}
-\frac{\theta(t+\Delta t)-\theta(t)}{\Delta t},
\\
\mathcal{E}_3(t)
& :=\frac{u(t+\Delta t)-u(t)}{\Delta t}
-\frac{\dot u(t+\Delta t)+\dot u(t)}{2},
\end{align*}
and then choosing $v=\frac{\chi^{n+1}-\chi^{n}}{\Delta t}$
in \eqref{p1fe0}, and using \eqref{r2}, we can derive the following,
\begin{align}\label{p1fe1}
\nonumber
\frac{\rho}{2\Delta t}&\left(\llnorm{\varpi^{n+1}}^2-\llnorm{\varpi^n}^2\right)
+\frac{1}{2{\Delta t}}\left(\enorm{\chi^{n+1}}^2-\enorm{\chi^n}^2\right)
\\\nonumber&\qquad
-\frac{1}{2}\sum_{q=1}^{N_\varphi}\Enorm{\varsigma_q^{n+1}+\varsigma_q^n}{\frac{\chi^{n+1}-\chi^{n}}{\Delta t}}
\\\nonumber &
=\frac{\rho}{2\Delta t}\Lnorm{\dot\theta^{n+1}-\dot\theta^n}{\varpi^{n+1}+\varpi^n}-\frac{\rho}{\Delta t}\Lnorm{\dot\theta^{n+1}-\dot\theta^n}{\mathcal{E}_2^n}
\\\nonumber&\qquad
-\frac{\rho}{\Delta t}\Lnorm{\dot\theta^{n+1}-\dot\theta^n}{\mathcal{E}_3^n}+\frac{\rho}{2}\Lnorm{\mathcal{E}_1^n}{\varpi^{n+1}+\varpi^n}-{\rho}\Lnorm{\mathcal{E}_1^n}{\mathcal{E}_2^n}
\\&\qquad
-{\rho}\Lnorm{\mathcal{E}_1^n}{\mathcal{E}_3^n}+\frac{\rho}{\Delta t}\Lnorm{\varpi^{n+1}-\varpi^n}{\mathcal{E}_2^n}+\frac{\rho}{\Delta t}\Lnorm{\varpi^{n+1}-\varpi^n}{\mathcal{E}_3^n}.
\end{align}
Summing this over $n=0,\ldots,m-1$, where $m\leq N$, we get
\begin{align}\nonumber
\frac{\rho}{2\Delta t}&\llnorm{\varpi^{m}}^2
+\frac{1}{2{\Delta t}}\enorm{\chi^{m}}^2
    \nonumber
-\frac{1}{2{\Delta t}}\sum_{n=0}^{m-1}\sum_{q=1}^{N_\varphi}
\Enorm{\varsigma_q^{n+1}+\varsigma_q^n}{{\chi^{n+1}-\chi^{n}}}
\\\nonumber
=&\frac{\rho}{2\Delta t}\llnorm{\varpi^0}^2+\frac{1}{2{\Delta t}}\enorm{\chi^0}^2+\frac{\rho}{2\Delta t}\sum_{n=0}^{m-1}\Lnorm{\dot\theta^{n+1}-\dot\theta^n}{\varpi^{n+1}+\varpi^n}\\\nonumber&-\frac{\rho}{\Delta t}\sum_{n=0}^{m-1}\Lnorm{\dot\theta^{n+1}-\dot\theta^n}{\mathcal{E}_2^n}-\frac{\rho}{\Delta t}\sum_{n=0}^{m-1}\Lnorm{\dot\theta^{n+1}-\dot\theta^n}{\mathcal{E}_3^n}\\\nonumber&+\frac{\rho}{2}\sum_{n=0}^{m-1}\Lnorm{\mathcal{E}_1^n}{\varpi^{n+1}+\varpi^n}-{\rho}\sum_{n=0}^{m-1}\Lnorm{\mathcal{E}_1^n}{\mathcal{E}_2^n}-{\rho}\sum_{n=0}^{m-1}\Lnorm{\mathcal{E}_1^n}{\mathcal{E}_3^n}\\&+\frac{\rho}{\Delta t}\sum_{n=0}^{m-1}\Lnorm{\varpi^{n+1}-\varpi^n}{\mathcal{E}_2^n}+\frac{\rho}{\Delta t}\sum_{n=0}^{m-1}\Lnorm{\varpi^{n+1}-\varpi^n}{\mathcal{E}_3^n}.\label{p1fe2}
\end{align}
In a similar way, we consider the difference of \eqref{weak:p1e2} and \eqref{p1f2}
and obtain
\begin{align}
    \tau_q&\Enorm{\frac{\dot\psi_q^{n+1}+\dot\psi_q^{n}}{2}-\frac{{\Psi}_{hq}^{n+1}-{\Psi}_{hq}^{n}}{\Delta t}}{v}+
    \frac{1}{2}\Enorm{({\psi_q^{n+1}+\psi_q^{n}})-({{\Psi}_{hq}^{n+1}+{\Psi}_{hq}^{n}})}{v}\nonumber\\
    =&\frac{\varphi_q}{2}\Enorm{({u^{n+1}+u^{n}})-({Z_h^{n+1}+Z_h^{n}})}{v}\label{p1fe0_Int}
\end{align}
for each $q$.
Taking $v=(\varsigma_q^{n+1}-\varsigma_q^n)/\Delta t$ in \eqref{p1fe0_Int},
using Galerkin orthogonality with the definitions of
$\varsigma_q^n$, $\chi^n$, $\theta^n$ and $\vartheta_q$,
and then summing over $n=0,\ldots,m-1$ we obtain,
\begin{align}
\frac{\tau_q}{\Delta t^2}
&\sum_{n=0}^{m-1}\enorm{\varsigma_q^{n+1}-\varsigma_q^n}^2+\frac{1}{2\Delta t}\left(\enorm{\varsigma_q^{m}}^2-\enorm{\varsigma_q^{0}}^2\right)-\frac{\varphi_q}{2\Delta t}\sum_{n=0}^{m-1}\Enorm{\chi^{n+1}+\chi^n}{\varsigma_q^{n+1}-\varsigma_q^n}
\nonumber\\
=&\frac{\tau_q}{\Delta t}\sum_{n=0}^{m-1}\Enorm{E_q^n}{\varsigma_q^{n+1}-\varsigma_q^n}
\label{eq:P1_error_proof_1}
\end{align}
where, for each $q$,
\[
E_q(t):=\frac{\dot\psi_q(t+\Delta t)+\dot\psi_q(t)}{2}-\frac{\psi_q(t+\Delta t)-\psi_q(t)}{\Delta t}.
\]
Next, we recall that $\varsigma^0_q=0$ due to the initial condition $\psi_q(0)=0$, and
obtain,
\[\sum_{n=0}^{m-1}\Enorm{\chi^{n+1}+\chi^n}{\varsigma_q^{n+1}-\varsigma_q^n}=2\Enorm{\chi^{m}}{\varsigma_q^{m}}-\sum_{n=0}^{m-1}\Enorm{\chi^{n+1}-\chi^n}{\varsigma_q^{n+1}+\varsigma_q^n},\]and
\[\sum_{n=0}^{m-1}\Enorm{E_q^n}{\varsigma_q^{n+1}-\varsigma_q^n}=\Enorm{E_q^{m-1}}{\varsigma_q^m}-\sum_{n=0}^{m-2}\Enorm{E_q^{n+1}-E_q^n}{\varsigma_q^{n+1}},\]
and then using these in \eqref{eq:P1_error_proof_1} results in
\begin{gather}
\frac{\varphi_q}{2\Delta t}\sum_{n=0}^{m-1}\Enorm{\chi^{n+1}-\chi^n}{\varsigma_q^{n+1}+\varsigma_q^n}
=
\frac{\varphi_q}{\Delta t}\Enorm{\chi^m}{\varsigma_q^m}-\frac{\tau_q}{\Delta t^2}\sum_{n=0}^{m-1}\enorm{\varsigma_q^{n+1}-\varsigma_q^n}^2\nonumber
\nonumber\\
-\frac{1}{2\Delta t}\enorm{\varsigma_q^{m}}^2+\frac{\tau_q}{\Delta t}\Enorm{E_q^{m-1}}{\varsigma_q^{m}}
-\frac{\tau_q}{\Delta t}    \sum_{n=0}^{m-2}\Enorm{E_q^{n+1}-E_q^n}{\varsigma_q^{n+1}}\label{p1fe2_Int}
\end{gather}
since $\varsigma_q^0=0$ for each $q$.
Using \eqref{p1fe2_Int} in \eqref{p1fe2} and  multiplying by $\Delta t$ leads to,

\begin{multline*}
    \frac{\rho}{2}\llnorm{\varpi^{m}}^2
    +\frac{1}{2}\enorm{\chi^{m}}^2
    \nonumber+\frac{1}{2}\sum_{q=1}^{N_\varphi}\frac{1}{\varphi_q}\enorm{\varsigma_q^{m}}^2+{\Delta t}\sum_{n=0}^{m-1}\sum_{q=1}^{N_\varphi}\frac{\tau_q}{\varphi_q}\enorm{\frac{\varsigma_q^{n+1}-\varsigma_q^n}{\Delta t}}^2
\\
=
\frac{\rho}{2}\llnorm{\varpi^0}^2+\frac{1}{2{}}\enorm{\chi^0}^2+\frac{\rho}{2}\sum_{n=0}^{m-1}\Lnorm{\dot\theta^{n+1}-\dot\theta^n}{\varpi^{n+1}+\varpi^n}
-{\rho}\sum_{n=0}^{m-1}\Lnorm{\dot\theta^{n+1}
-\dot\theta^n}{\mathcal{E}_2^n}
\\
-{\rho}\sum_{n=0}^{m-1}\Lnorm{\dot\theta^{n+1}-\dot\theta^n}{\mathcal{E}_3^n}
+\frac{\rho}{2}\Delta t\sum_{n=0}^{m-1}\Lnorm{\mathcal{E}_1^n}{\varpi^{n+1}+\varpi^n}-{\rho}\Delta t\sum_{n=0}^{m-1}\Lnorm{\mathcal{E}_1^n}{\mathcal{E}_2^n}
\\
-{\rho}\Delta t\sum_{n=0}^{m-1}\Lnorm{\mathcal{E}_1^n}{\mathcal{E}_3^n}
+{\rho}\sum_{n=0}^{m-1}\Lnorm{\varpi^{n+1}-\varpi^n}{\mathcal{E}_2^n}+{\rho}\sum_{n=0}^{m-1}\Lnorm{\varpi^{n+1}-\varpi^n}{\mathcal{E}_3^n}+\sum_{q=1}^{N_\varphi}\Enorm{\chi^m}{\varsigma_q^m}\\
+\sum_{q=1}^{N_\varphi}\frac{\tau_q}{\varphi_q}\Enorm{E_q^{m-1}}{\varsigma_q^{m}}-\sum_{n=0}^{m-2}\sum_{q=1}^{N_\varphi}\frac{\tau_q}{\varphi_q}    \Enorm{E_q^{n+1}-E_q^n}{\varsigma_q^{n+1}},
\end{multline*}
%
%
and in this we note that
$\llnorm{\varpi^0}\leq\llnorm{\dot\theta^0}$ and $\enorm{\chi^0}^2=0$
by the elliptic projection and the choice of discrete initial conditions.

In this Crank-Nicolson method, the terms
$\mathcal{E}_1^n,\mathcal{E}_2^n,\mathcal{E}_3^n, E^n_q$ and $\dot E^n_q$ are
of order $O(\Delta t^2)$.
To see this notice that
\[\frac{\dot w(t_{n+1})+\dot w(t_{n})}{2}-\frac{w(t_{n+1})-w(t_{n})}{\Delta t}=\frac{1}{2\Delta t}\int^{t_{n+1}}_{t_n}w^{(3)}(t)(t_{n+1}-t)(t-t_n)dt,\]
where $w^{(3)}$ is the third time derivative of $w$.
If $w^{(3)}\in L_2(t_n,t_{n+1};L_2(\Omega))$, the Cauchy-Schwarz inequality implies that
\[
\llnorm{\frac{\dot w(t_{n+1})+\dot w(t_{n})}{2}-\frac{w(t_{n+1})-w(t_{n})}{\Delta t}}^2\leq\frac{\Delta t^3}{4}\lVert w^{(3)}\rVert_{L_2(t_n,t_{n+1};L_2(\Omega))}.
\]
Similarly, if $w^{(3)}\in L_2(t_n,t_{n+1};V)$, we can also obtain
\[
\enorm{\frac{\dot w(t_{n+1})+\dot w(t_{n})}{2}-\frac{w(t_{n+1})-w(t_{n})}{\Delta t}}^2\leq C{\Delta t^3}\lVert w^{(3)}\rVert_{L_2(t_n,t_{n+1};V)},
\]
for some positive constant $C$. Therefore, as we are supposing
$u\in H^4(0,T;H^s(\Omega))$, we can derive bounds of order $O(\Delta t^2)$ using 
standard techniques. For example,
\begin{align*}
\left\vert
\Delta t\sum_{n=0}^{m-1}\Lnorm{\mathcal{E}_1^n}{\mathcal{E}_2^n}
\right\vert
\leq &
\frac{\Delta t}{2}\sum_{n=0}^{m-1}\llnorm{\mathcal{E}_1^n}^2+\frac{\Delta t}{2}\sum_{n=0}^{m-1}\llnorm{\mathcal{E}_2^n}^2
\\
\leq&
\frac{\Delta t^4}{8}\Llnorm{u^{(4)}}^2+\frac{\Delta t^4}{8}\Llnorm{\theta^{(3)}}^2.	
\end{align*}
On the other hand, Lemma \ref{lemma:estimate:elliptic} gives us $L_2$ spatial error
estimates for $\theta(t)$ and its time derivatives and so, after we note that the regularity
of internal variables follows that of the solution, we can derive spatial error bounds on
$\mathcal{E}_1^n,\mathcal{E}_2^n,\mathcal{E}_3^n, E^n_q$ and $\dot E^n_q$.

Lastly, using the same techniques as in the proof of Theorem~\ref{thm:fully:stability:p1}, 
we can derive
\begin{align*}
        \max_{0\leq k\leq N}&\llnorm{\varpi^{k}}
    +\max_{0\leq k\leq N}\enorm{\chi^{k}}+
    \sum_{q=1}^{N_\varphi}\max_{0\leq k\leq N}\enorm{\varsigma_q^{k}}+\sqrt{{\Delta t}\sum_{n=0}^{N-1}\sum_{q=1}^{N_\varphi}\frac{\tau_q}{\varphi_q}\enorm{\frac{\varsigma_q^{n+1}-\varsigma_q^n}{\Delta t}}^2}
    \\\nonumber
&\leq
CT\lVert u\rVert_{H^4(0,T;H^{s_2}(\Omega))}(h^{r-1}+\Delta t^2).
\end{align*}
Furthermore, if elliptic regularity holds, as in \eqref{eq:ellipticRegularity:H2},
\begin{align*}
        \max_{0\leq k\leq N}&\llnorm{\varpi^{k}}
    +\max_{0\leq k\leq N}\enorm{\chi^{k}}+
    \sum_{q=1}^{N_\varphi}\max_{0\leq k\leq N}\enorm{\varsigma_q^{k}}+\sqrt{{\Delta t}\sum_{n=0}^{N-1}\sum_{q=1}^{N_\varphi}\frac{\tau_q}{\varphi_q}\enorm{\frac{\varsigma_q^{n+1}-\varsigma_q^n}{\Delta t}}^2}
    \\
&\leq
CT\lVert u\rVert_{H^4(0,T;H^{s_2}(\Omega))}(h^{r}+\Delta t^2).
\end{align*}
In these, $C$ is a positive constant that is independent of $T$, $h$, $\Delta t$, and the exact and numerical solutions.
\end{proof}

The steps that were omitted in the proof above are those that were used in 
Theorem~\ref{thm:fully:stability:p1} to circumvent the need for Gr\"onwall's inequality. 
They were omitted to save space, but can be reconstructed by following the same steps
as in the stability proof.
The main point is that with this extra effort we can state the following error estimate
where the $T$-dependence of the constant is explicit and non-exponential.

\begin{theorem}\label{thm:error:p1}
Suppose that $u\in H^4(0,T;H^{s_2}(\Omega))\cap C^1(0,T;H^{s_2}(\Omega))$, then we have
\begin{align*}
\max_{0\leq k\leq N}\llnorm{\dot u(t_k)-W^k_h}+\max_{0\leq k\leq N}\enorm{u(t_k)-Z^k_h}\leq CT\lVert u\rVert_{H^4(0,T;H^{s_2}(\Omega))}(h^{r-1}+\Delta t^2)
\end{align*}
where $C$ is a positive constant independent of $T$, $h$, $\Delta t$, and the 
exact and numerical
solutions, but dependent on $\Omega$, $\partial \Omega$, $\rho$, $D$ and the
internal variables. If elliptic regularity, \eqref{eq:ellipticRegularity:H2}, holds,
we also have that\begin{align*}
\max_{0\leq k\leq N}\llnorm{\dot u(t_k)-W^k_h}\leq CT\lVert u\rVert_{H^4(0,T;H^{s_2}(\Omega))}(h^{r}+\Delta t^2)
\end{align*}
for a constant $C$ with the same properties as the one above.
\end{theorem}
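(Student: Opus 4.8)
The plan is to obtain both estimates directly from the triangle inequality applied to the error splittings $\dot u(t_k)-W^k_h=\dot\theta^k-\varpi^k$ and $u(t_k)-Z^k_h=\theta^k-\chi^k$ that were set up before Lemma~\ref{lemma:estimate:p1}, using that lemma to control the $\varpi^k$ and $\chi^k$ contributions and using Lemma~\ref{lemma:estimate:elliptic} to control the purely spatial projection errors $\theta^k=u(t_k)-Ru(t_k)$ and $\dot\theta^k=\dot u(t_k)-R\dot u(t_k)$ (recalling $\partial_t(Ru)=R\dot u$). No new energy argument is required: the heavy lifting has already been carried out in Lemma~\ref{lemma:estimate:p1}.

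First, for the energy-norm displacement error I would write
\[
\enorm{u(t_k)-Z^k_h}\le\enorm{\theta^k}+\enorm{\chi^k},
\]
take the maximum over $0\le k\le N$, bound $\max_k\enorm{\chi^k}$ by Lemma~\ref{lemma:estimate:p1}, and bound the energy-norm projection error by $\enorm{\theta^k}\le Ch^{r-1}|u(t_k)|_{H^r(\Omega)}$ from Lemma~\ref{lemma:estimate:elliptic}. In parallel, for the velocity error in $L_2$,
\[
\llnorm{\dot u(t_k)-W^k_h}\le\llnorm{\dot\theta^k}+\llnorm{\varpi^k},
\]
with $\max_k\llnorm{\varpi^k}$ again supplied by Lemma~\ref{lemma:estimate:p1} and $\llnorm{\dot\theta^k}\le Ch^{r-1}|\dot u(t_k)|_{H^r(\Omega)}$ from the $L_2$ branch of Lemma~\ref{lemma:estimate:elliptic}. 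Adding the two inequalities gives the first claimed bound, with the $\Delta t^2$ contribution entering only through Lemma~\ref{lemma:estimate:p1}, since the elliptic projection errors are $\Delta t$-independent.

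To convert the pointwise-in-time seminorms into the global norm on the right-hand side, I would note that $r=\min(s_1+1,s_2)\le s_2$, so $H^{s_2}(\Omega)\hookrightarrow H^r(\Omega)$ and $|u(t_k)|_{H^r(\Omega)}\le C\|u(t_k)\|_{H^{s_2}(\Omega)}$, and then invoke the time Sobolev embedding $H^4(0,T)\hookrightarrow C^1([0,T])$ (available under the hypothesis $u\in H^4(0,T;H^{s_2}(\Omega))\cap C^1(0,T;H^{s_2}(\Omega))$) to bound $\max_k\|u(t_k)\|_{H^{s_2}(\Omega)}$ and $\max_k\|\dot u(t_k)\|_{H^{s_2}(\Omega)}$ by $C\|u\|_{H^4(0,T;H^{s_2}(\Omega))}$. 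This closes the first estimate with rate $h^{r-1}+\Delta t^2$.

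For the second estimate, which involves only the velocity error in the $L_2$ norm, the improvement from $h^{r-1}$ to $h^r$ comes solely from elliptic regularity: under \eqref{eq:ellipticRegularity:H2}, Lemma~\ref{lemma:estimate:elliptic} sharpens $\llnorm{\dot\theta^k}$ to $Ch^r|\dot u(t_k)|_{H^r(\Omega)}$, and the elliptic-regularity branch of Lemma~\ref{lemma:estimate:p1} gives $\max_k\llnorm{\varpi^k}\le CT\|u\|_{H^4(0,T;H^{s_2}(\Omega))}(h^r+\Delta t^2)$; the same time-embedding argument then yields the stated bound. I do not anticipate a genuine obstacle; the one point requiring care is why the energy-norm displacement error cannot likewise be pushed to $h^r$. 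This is structural rather than technical: the Aubin--Nitsche duality that buys the extra power of $h$ operates in the $L_2$ norm, whereas the energy-norm projection error $\enorm{\theta^k}$ is intrinsically only $O(h^{r-1})$, so the displacement bound is necessarily limited to $h^{r-1}$ even when elliptic regularity holds.
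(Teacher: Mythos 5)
Your proposal is correct and follows essentially the same route as the paper's own proof: both split the errors via the elliptic projection, apply the triangle inequality, and combine Lemma~\ref{lemma:estimate:p1} (for $\chi^k$, $\varpi^k$) with Lemma~\ref{lemma:estimate:elliptic} (for $\theta^k$, $\dot\theta^k$), invoking the sharpened $L_2$ bounds under elliptic regularity for the second estimate. Your added remarks on the time-Sobolev embedding and on why the energy-norm bound cannot be improved to $h^r$ are consistent details that the paper leaves implicit.
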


\begin{proof}
From Lemma \ref{lemma:estimate:p1} we have 
\[
\max_{0\leq k\leq N}\llnorm{\varpi^{k}}+ \max_{0\leq k\leq N}\enorm{\chi^{k}}\leq CT\lVert u\rVert_{H^4(0,T;H^{s_2}(\Omega))}(h^{r-1}+\Delta t^2)
\]
for some positive $C$ with the stated properties.
Combining this with Lemma~\ref{lemma:estimate:elliptic}, we have for any $n$
such that $0\leq n\leq N$,
\begin{align*}
    \enorm{e^n_h}=\enorm{\theta^n-\chi^n}    \leq\enorm{\theta^n}+\enorm{\chi^n}
    \leq CT\lVert u\rVert_{H^4(0,T;H^{s_2}(\Omega))}(h^{r-1}+\Delta t^2),
\end{align*}
and in a similar way, we can also derive
\begin{align*}
\llnorm{\tilde e^n_h}
\leq&CT\lVert u\rVert_{H^4(0,T;H^{s_2}(\Omega))}(h^{r-1}+\Delta t^2).
\end{align*}
Since $n\le N$, it is also true that
\begin{align*}
    \max_{0\leq k\leq N}\llnorm{\tilde e^{k}}+\max_{0\leq k\leq N}\enorm{e^k_h}
    \leq&CT\lVert u\rVert_{H^4(0,T;H^{s_2}(\Omega))}(h^{r-1}+\Delta t^2).
\end{align*}
which proves the first part of the theorem.
If we have elliptic regularity then we conclude,
\begin{align*}
    \max_{0\leq k\leq N}\llnorm{\tilde e^k_h}\leq\max_{0\leq k\leq N}\llnorm{\varpi^{k}}+\max_{0\leq k\leq N}\llnorm{\dot\theta^{k}}
    \leq CT\lVert u\rVert_{H^4(0,T;H^{s_2}(\Omega))}(h^{r}+\Delta t^2)
\end{align*}
and this now completes the proof.
\end{proof}

With elliptic regularity we can obtain an improved estimate for 
$u(t_k)-Z^k_h$, as shown in the following corollary.

\begin{corollary}\label{cor:error:p1}
Under same conditions as \textnormal{Theorem \ref{thm:error:p1}}, if elliptic regularity holds, then\begin{align*}
\max_{0\leq k\leq N}\llnorm{u(t_k)-Z^k_h}\leq CT\lVert u\rVert_{H^4(0,T;H^{s_2}(\Omega))}(h^{r}+\Delta t^2).
\end{align*}
\end{corollary}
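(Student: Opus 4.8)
The plan is to reuse the elliptic-projection splitting $u(t_k)-Z_h^k = \theta^k - \chi^k$ and to estimate the two pieces \emph{in the $L_2(\Omega)$ norm}, being careful \emph{not} to route through the energy norm of the full error. The key point is that the energy estimate of Theorem~\ref{thm:error:p1} is limited to the rate $h^{r-1}$ only because of $\enorm{\theta^k}$, the energy norm of the elliptic projection error, which elliptic regularity does not improve. In the $L_2$ norm, however, both contributions improve: $\llnorm{\theta^k}$ gains a power of $h$ from elliptic regularity, and $\chi^k$ is an element of $V^h$ whose energy norm has already been bounded at the optimal rate in Lemma~\ref{lemma:estimate:p1}.

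Concretely, I would start from $\max_k\llnorm{u(t_k)-Z_h^k}\le\max_k\llnorm{\theta^k}+\max_k\llnorm{\chi^k}$. For the first term, Lemma~\ref{lemma:estimate:elliptic} under elliptic regularity gives $\llnorm{\theta^k}=\llnorm{u(t_k)-Ru(t_k)}\le Ch^{r}|u(t_k)|_{H^r(\Omega)}$, and the hypothesis $u\in C^1(0,T;H^{s_2}(\Omega))$ together with the embedding $H^4(0,T)\hookrightarrow C([0,T])$ lets me replace the pointwise seminorm by $\lVert u\rVert_{H^4(0,T;H^{s_2}(\Omega))}$, so that $\max_k\llnorm{\theta^k}\le Ch^{r}\lVert u\rVert_{H^4(0,T;H^{s_2}(\Omega))}$. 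For the second term, the norm equivalence $\llnorm{v}\le\kappa^{-1/2}\enorm{v}$ on $V$ reduces matters to the energy estimate of $\chi^k$, which is exactly one of the quantities controlled by the elliptic-regularity form of Lemma~\ref{lemma:estimate:p1}, namely $\max_k\enorm{\chi^k}\le CT\lVert u\rVert_{H^4(0,T;H^{s_2}(\Omega))}(h^{r}+\Delta t^2)$. Adding the two bounds yields the claim with the stated $T$-dependence.

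The only thing to watch — and the single nonroutine observation — is the choice of where to measure each term. Bounding $\llnorm{\chi^k}$ through $\enorm{\chi^k}$ is legitimate here precisely because $\chi^k$ is driven by the $L_2$-level consistency errors (which carry the improved $h^{r}$ rate under elliptic regularity), so its energy norm is already $O(h^{r})$; whereas $\theta^k$ must be kept in $L_2$ to benefit from elliptic regularity, rather than being paired with $\chi^k$ inside a single energy estimate where the uncontrollable $\enorm{\theta^k}$ would force the rate back down to $h^{r-1}$. No further energy argument, summation by parts, or Gr\"onwall-type manipulation is needed; the corollary follows directly from Lemmas~\ref{lemma:estimate:elliptic} and~\ref{lemma:estimate:p1}.
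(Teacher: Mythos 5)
Your proposal is correct and follows essentially the same route as the paper's own proof: split $u(t_k)-Z_h^k=\theta^k-\chi^k$ by the triangle inequality, bound $\llnorm{\theta^k}\le Ch^r|u(t_k)|_{H^r(\Omega)}$ via Lemma~\ref{lemma:estimate:elliptic} under elliptic regularity, and bound $\llnorm{\chi^k}\le\kappa^{-1/2}\enorm{\chi^k}$ by coercivity together with the elliptic-regularity form of Lemma~\ref{lemma:estimate:p1}. Your closing remark about keeping $\theta^k$ in $L_2$ rather than energy norm is precisely why the paper's argument works, so nothing further is needed.
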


\begin{proof}
From the error splitting and the coercivity of the bilinear form, 
$a(\cdot,\cdot)$, we have,
\[
\llnorm{u(t_n)-Z^n_h}=\llnorm{e^n_h}\leq\llnorm{\theta^n}+\llnorm{\chi^n}
\leq
\llnorm{\theta^n}+\frac{1}{\sqrt{\kappa}}\enorm{\chi^n}
\]
for any $0\leq n\leq N$. Also, from
Lemmas~\ref{lemma:estimate:elliptic} and~\ref{lemma:estimate:p1} we get 
$\llnorm{\theta^n}\leq C\lvert u^n\rvert_{H^r(\Omega)}h^{r}$ 
and $\enorm{\chi^n}\leq CT\lVert u\rVert_{H^4(0,T;H^{s_2}(\Omega))}(h^{r}+\Delta t^2)$.
We therefore obtain
\begin{align*}
\max_{0\leq k\leq N}\llnorm{u(t_k)-Z^k_h}\leq CT\lVert u\rVert_{H^4(0,T;H^{s_2}(\Omega))}(h^{r}+\Delta t^2)
\end{align*}
as claimed.
\end{proof}

This completes the analysis of the displacement form of the problem. We now
move on to the velocity form.

\section{Velocity form}\label{sec:discveloform}\noindent
Recall the variational formulation of the velocity form \eqref{weak:p2e1} --- \eqref{weak:p2e2}. Again we adopt a Crank-Nicolson type of time discretization and pose the fully discrete
formulation for \textbf{(P2)} as follows.

$\mathrm{(\mathbf{P2})}^h$
Find $Z^n_h$, $W^n_h$, $\mathcal{S}_{h1}^n,\ \mathcal{S}_{h2}^n,\ldots,\mathcal{S}_{hN_\varphi}^n\in V^h$ for $n=0,\ldots,N$ such that \eqref{r1} holds and
\begin{align}
\Lnorm{\rho\frac{W^{n+1}_h-W^{n}_h}{\Delta t}}{v}
+\varphi_0a\left(\frac{Z_h^{n+1}+Z_h^{n}}{2},v\right)
+ &\sum\limits_{q=1}^{N_\varphi}
a\left(\frac{\mathcal{S}_{hq}^{n+1}+\mathcal{S}_{hq}^{n}}{2},v\right)
\nonumber\\
&=\frac{1}{2}( F_v(t_{n+1};v)+ F_v(t_{n};v)),\label{p2f1}
\\
\tau_{q}a\left(\frac{\mathcal{S}_{hq}^{n+1}-\mathcal{S}_{hq}^{n}}{\Delta t},v\right)
+a\left(\frac{\mathcal{S}_{hq}^{n+1}+\mathcal{S}_{hq}^{n}}{2},v\right)
&=\tau_{q}\varphi_qa\left(\frac{W_h^{n+1}+W_h^{n}}{2},v\right)\ \textrm{for each }{q},
\label{p2f2}
\\
a(Z_h^0,{v})
&=a({u_0},{v}),
\label{p2f3}\\
\Lnorm{W^0_h}{v}
&=\Lnorm{w_0}{v},
\label{p2f4}\\
\mathcal{S}_{hq}^0
&=0,\  \text{for each $q$,}
\label{p2f5}
\end{align}
each for all $v\in V^h$.

We first give a stability estimate. The proof is similar to that for the displacement form, Theorem~\ref{thm:fully:stability:p1}, and so most of the steps are
not included here.

\begin{theorem}\label{thm:fully:stability:p2}
If $f\in C(0,T;L_2(\Omega))$, $g_N\in H^1(0,T;L_2(\Gamma_N))\cap C(0,T;L_2(\Gamma_N))$ and $u_0\in H^1(\Omega)$,
then $\mathrm{(\mathbf{P2})}^h$ has a unique solution and there exists a positive constant $C$ depending on $\Omega,\ \partial\Omega$ and the sets $\{\varphi_q\}_{q=0}^{N_\varphi}$ and $\{\tau_q\}_{q=1}^{N_\varphi}$, but independent of the 
exact and numerical solutions, $h$, $\Delta t$ and $T$ such that,

\begin{align*}
&\max_{0\leq n\leq N}\llnorm{W^{n}_h}^2+\max_{0\leq n\leq N}\enorm{Z^{n}_h}^2+\sum_{q=1}^{N_\varphi}\max_{0\leq n\leq N}\enorm{\mathcal{S}_{hq}^{n}}^2+\sum_{q=1}^{N_\varphi}\sum_{n=0}^{N-1}{\Delta t}\enorm{\mathcal{S}_{hq}^{n+1}+\mathcal{S}_{hq}^{n}}^2
\\
&\leq
~CT^2\bigg(\llnorm{w_0}^2+\enorm{u_0}^2+\ilnorm{{f}}^2+\hgnorm{g_N}^2\bigg).
\end{align*}
%
%
\end{theorem}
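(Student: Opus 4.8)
The plan is to mirror the energy argument used for Theorem~\ref{thm:fully:stability:p1}, exploiting the symmetry and coercivity of $a(\cdot,\cdot)$ together with the constraint \eqref{r1}. Existence and uniqueness will again follow from the \emph{a priori} bound itself: $\mathrm{(\mathbf{P2})}^h$ is a square linear system in the unknowns $(Z^{n+1}_h,W^{n+1}_h,\{\mathcal{S}^{n+1}_{hq}\}_q)$ at each step, so the stability estimate applied with zero data forces the trivial solution and hence invertibility of the step map.

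First I would test \eqref{p2f1} with $v=\Delta t(W^{n+1}_h+W^n_h)$ and test \eqref{p2f2}, for each $q$, with $v=\frac{\Delta t}{\tau_q\varphi_q}(\mathcal{S}^{n+1}_{hq}+\mathcal{S}^n_{hq})$. Using \eqref{r1} in the form $\Delta t(W^{n+1}_h+W^n_h)=2(Z^{n+1}_h-Z^n_h)$ turns the inertial term into $\rho(\llnorm{W^{n+1}_h}^2-\llnorm{W^n_h}^2)$ and the stiffness term into $\varphi_0(\enorm{Z^{n+1}_h}^2-\enorm{Z^n_h}^2)$, while the coupling term in \eqref{p2f1} becomes $\sum_q a(\mathcal{S}^{n+1}_{hq}+\mathcal{S}^n_{hq},Z^{n+1}_h-Z^n_h)$. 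The key observation is that, after again invoking \eqref{r1} on the right of \eqref{p2f2}, the tested internal-variable equation reads $\frac{1}{\varphi_q}(\enorm{\mathcal{S}^{n+1}_{hq}}^2-\enorm{\mathcal{S}^n_{hq}}^2)+\frac{\Delta t}{2\tau_q\varphi_q}\enorm{\mathcal{S}^{n+1}_{hq}+\mathcal{S}^n_{hq}}^2=a(\mathcal{S}^{n+1}_{hq}+\mathcal{S}^n_{hq},Z^{n+1}_h-Z^n_h)$, whose right-hand side is \emph{exactly} the coupling term. Substituting therefore eliminates the coupling cleanly --- unlike the displacement case there is no residual term of the form $a(Z^m_h,\mathcal{S}^m_{hq})$ --- and leaves a single identity in which every term on the left is a nonnegative energy. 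Summing over $q$ and over $n=0,\dots,m-1$ telescopes the squared-norm differences and accumulates the dissipative sum $\sum_q\sum_{n}\frac{\Delta t}{2\tau_q\varphi_q}\enorm{\mathcal{S}^{n+1}_{hq}+\mathcal{S}^n_{hq}}^2$, using $\mathcal{S}^0_{hq}=0$ from \eqref{p2f5}, and produces the analogue of \eqref{eq:P1stabproof_1} with $F_v$ on the right.

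Next I would estimate the right-hand side, writing $F_v(t;v)=F_d(t;v)-\sum_q\varphi_q e^{-t/\tau_q}a(u_0,v)$. The $F_d$ contribution is handled verbatim as in Theorem~\ref{thm:fully:stability:p1}: sum by parts in time on the $g_N$ term to expose $\dot g_N$, then apply the trace inequality \eqref{cgtrace}, Cauchy--Schwarz and Young's inequality with small parameters so that the resulting $\max_n\llnorm{W^n_h}^2$ and $\max_n\enorm{Z^n_h}^2$ pieces can be absorbed into the left. The genuinely new ingredient is the $u_0$ term. After replacing $\Delta t(W^{n+1}_h+W^n_h)$ by $2(Z^{n+1}_h-Z^n_h)$, this contribution is a sum of the form $-\sum_n\sum_q c^q_n\,a(u_0,Z^{n+1}_h-Z^n_h)$ with $c^q_n=\varphi_q(e^{-t_{n+1}/\tau_q}+e^{-t_n/\tau_q})$; I would sum by parts in $n$ to move the difference onto the smooth coefficients. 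This yields endpoint terms $c^q_{m-1}a(u_0,Z^m_h)$ and $c^q_0 a(u_0,Z^0_h)$ together with an interior sum weighted by $c^q_{n-1}-c^q_n$, which is $O(\Delta t/\tau_q)$ since $e^{-t_{n-1}/\tau_q}-e^{-t_{n+1}/\tau_q}=\int_{t_{n-1}}^{t_{n+1}}\tau_q^{-1}e^{-s/\tau_q}\,ds$. Bounding these by Cauchy--Schwarz and Young, the endpoint term at $m$ and the $\max_n\enorm{Z^n_h}^2$ arising from the interior sum are absorbed into the left (the factor $\Delta t\sum_n 1\le T$ is what keeps everything polynomial in $T$), while the remaining pieces reduce to data, using $\enorm{Z^0_h}\le\enorm{u_0}$ from \eqref{p2f3}.

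Finally I would collect terms exactly as in the displacement proof: fix the Young parameters to absorb all the $\max$-norms on the right, use $\llnorm{W^0_h}\le\llnorm{w_0}$ and $\enorm{Z^0_h}\le\enorm{u_0}$, and track the powers of $T$ --- the forcing sum obeys $\Delta t\sum_n\llnorm{f(t_n)}^2\le T\ilnorm{f}^2$ and the inverse Young parameters scale like $T+\Delta t\le 2T$ --- so that the final constant is polynomial in $T$, giving the stated $CT^2$ bound after noting $\ignorm{g_N}\le C\hgnorm{g_N}$. I expect the main obstacle to be precisely the $u_0$-forcing term: it is the only term with no counterpart in Theorem~\ref{thm:fully:stability:p1}, and handling it by summation by parts rather than a direct estimate is exactly what prevents a Gr\"onwall-type exponential from entering and lets the constant remain polynomial in $T$.
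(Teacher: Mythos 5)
Your proposal is correct, and its core is the same energy argument as the paper's: the same (scaled) test functions $v=\Delta t(W^{n+1}_h+W^n_h)$ in \eqref{p2f1} and $v\propto(\mathcal{S}^{n+1}_{hq}+\mathcal{S}^n_{hq})$ in \eqref{p2f2}, the same exact cancellation of the coupling terms (so, as you note, no residual $a(Z^m_h,\mathcal{S}^m_{hq})$ term appears, in contrast to the displacement form), the same telescoping identity using $\mathcal{S}^0_{hq}=0$, and the same reduction of the $F_d$ contribution to the estimates of Theorem~\ref{thm:fully:stability:p1}. Where you genuinely diverge is the $u_0$ term in $F_v$: the paper disposes of it with the one-line estimate $\bigl|-\sum_q\varphi_q e^{-t/\tau_q}a(u_0,v)\bigr|\le\enorm{u_0}\enorm{v}$ and the remark that ``similar arguments'' finish the proof, whereas you rewrite $v=W^{n+1}_h+W^n_h$ as $\tfrac{2}{\Delta t}(Z^{n+1}_h-Z^n_h)$ via \eqref{r1} and sum by parts in $n$, moving the differences onto the exponential coefficients. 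Your route is actually the more watertight one: read literally, the paper's bound leaves $\Delta t\sum_n\enorm{u_0}\enorm{W^{n+1}_h+W^n_h}$, and since the scheme controls $W^n_h$ only in $L_2(\Omega)$ (never in the energy norm), converting back to $Z$-differences without retaining the difference structure costs a factor $1/\Delta t$; your Abel summation, with the increments of the exponentials summing to $O(1)$ (certainly $O(T)$), is exactly what closes the estimate, and it parallels the treatment of the $g_N$ term in the proof of Theorem~\ref{thm:fully:stability:p1}, which is presumably what the authors intended by ``similar arguments.''
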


\begin{proof}
As before, once we prove the stated stability bound,
we can conclude that the linear system \eqref{p2f1}--\eqref{p2f5} has a unique
solution. So, we take $v=W^{n+1}_h+W^n_h$ in \eqref{p2f1} and
$v=\mathcal{S}_{hq}^{n+1}+\mathcal{S}_{hq}^{n}$ in \eqref{p2f2} for each $q$,
and sum over time to see that for $1\leq m\leq N$,
\begin{align}
{\rho}&\llnorm{W^{m}_h}^2+{\varphi_0}\enorm{Z^{m}_h}^2+\sum_{q=1}^{N_\varphi}\frac{1}{\varphi_q}\enorm{\mathcal{S}_{hq}^{m}}^2+\sum_{q=1}^{N_\varphi}\sum_{n=0}^{m-1}\frac{\Delta t}{2\tau_{q}\varphi_q}\enorm{\mathcal{S}_{hq}^{n+1}+\mathcal{S}_{hq}^{n}}^2\nonumber\\=&\rho\llnorm{W^0_h}^2+\varphi_0\enorm{Z^0_h}^2+\sum_{n=0}^{m-1}\frac{\Delta t}{2} \left(F_v\left(t_{n+1};W^{n+1}_h+W^n_h\right)+F_v\left(t_{n};W^{n+1}_h+W^n_h\right)\right).
\end{align}
We now follow the proof of Theorem \ref{thm:fully:stability:p1}, particularly
\eqref{pf:sta:p1:eq2}, the only main difference here being that $F_v$ also
includes
$-\sum\limits_{{q}=1}^{N_\varphi}\varphi_qe^{-t/\tau_{q}}a(u_0,v)$
(see under \eqref{weak:p2e2}).
This term is easily dealt with after we recall that $\varphi(0)=1$
and note that $0<e^{-t/\tau_{q}}\leq 1$ for $t\geq 0$ and for each $q$.
In fact,
\[
\left\vert
-\sum\limits_{{q}=1}^{N_\varphi}\varphi_qe^{-t/\tau_{q}}a(u_0,v)
\right\vert
\leq 
\sum\limits_{{q}=1}^{N_\varphi}\varphi_qe^{-t/\tau_{q}}\enorm{u_0}\enorm{v}\\
\leq
\sum\limits_{{q}=1}^{N_\varphi}\varphi_q\enorm{u_0}\enorm{v}
\leq\enorm{u_0}\enorm{v}.
\]
The proof can now be completed using similar arguments as for the proof of Theorem \ref{thm:fully:stability:p1}.
\end{proof}

Error estimates for $\mathrm{(\mathbf{P2})}^h$ can now be derived using similar
steps to those in Lemma \ref{lemma:estimate:p1} and Theorem \ref{thm:error:p1}.
Before the first main result we introduce this addtional notation for the
error splitting:
$\zeta_q - \mathcal{S}_{hq}^n = (\zeta_q-R\zeta_q) - \Upsilon^n_q$, for 
$\Upsilon^n_q:=\mathcal{S}_{hq}^n-R\zeta_q^n$ for $q=1,\ldots,N_\varphi$.
Only the $\Upsilon^n_q$ terms need estimating here, as we can use Galerkin orthogonality
on the other error terms.

\begin{lemma}\label{lemma:estimate:p2}
Suppose that $u\in H^4(0,T;H^{s_2}(\Omega))\cap C^1(0,T;H^{s_2}(\Omega))$,
then there exists a positive constant $C$ such that
\begin{gather*}
\max_{0\leq k\leq N}\llnorm{\varpi^{k}}+\max_{0\leq k\leq N}\enorm{\chi^{k}}+\sum_{q=1}^{N_\varphi}\max_{0\leq k\leq N}\enorm{\Upsilon_q^{k}}+\sqrt{\Delta t\sum_{n=0}^{N-1}\sum_{q=1}^{N_\varphi}\enorm{\Upsilon_q^{n+1}+\Upsilon_q^n}^2}\\
\leq CT\lVert u\rVert_{H^4(0,T;H^{s_2}(\Omega))}(h^{r-1}+\Delta t^2).
\end{gather*}
Furthermore, if we assume elliptic regularity as in \eqref{eq:ellipticRegularity:H2},
we also have

\begin{gather*}
\max_{0\leq k\leq N}\llnorm{\varpi^{k}}+\max_{0\leq k\leq N}\enorm{\chi^{k}}+\sum_{q=1}^{N_\varphi}\max_{0\leq k\leq N}\enorm{\Upsilon_q^{k}}+\sqrt{\Delta t\sum_{n=0}^{N-1}\sum_{q=1}^{N_\varphi}\enorm{\Upsilon_q^{n+1}+\Upsilon_q^n}^2}\\
\leq CT\lVert u\rVert_{H^4(0,T;H^{s_2}(\Omega))}(h^{r}+\Delta t^2).
\end{gather*}
In these the constant $C$ is independent of $h$, $\Delta t$, $T$ and the exact and 
numerical solutions, but depends on $\rho$, $D$, $\Omega$, $\partial \Omega$ and
the internal variable coefficients.
\end{lemma}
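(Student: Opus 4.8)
The plan is to reproduce the structure of Lemma~\ref{lemma:estimate:p1} and the stability argument of Theorem~\ref{thm:fully:stability:p2}, but with the velocity-form splitting $\chi^n$, $\varpi^n$, $\Upsilon^n_q$. First I would derive two error equations. Averaging \eqref{weak:p2e1} at $t_{n+1}$ and $t_n$, subtracting \eqref{p2f1}, and using Galerkin orthogonality (so $\Enorm{\theta^k}{v}=0$ and $\Enorm{\zeta_q^k-R\zeta_q^k}{v}=0$ for $v\in V^h$) gives, with $\mathcal{E}_1$ as defined after \eqref{p1fe0},
\[
\frac{\rho}{\Delta t}\Lnorm{\varpi^{n+1}-\varpi^n}{v}+\frac{\varphi_0}{2}\Enorm{\chi^{n+1}+\chi^n}{v}+\frac12\sum_{q=1}^{N_\varphi}\Enorm{\Upsilon_q^{n+1}+\Upsilon_q^n}{v}=\frac{\rho}{\Delta t}\Lnorm{\dot\theta^{n+1}-\dot\theta^n}{v}+\rho\Lnorm{\mathcal{E}_1^n}{v}.
\]
Doing the same with \eqref{weak:p2e2} and \eqref{p2f2}, the crucial feature of the velocity form is that its right-hand side carries $\dot u$, so Galerkin orthogonality now annihilates $\dot\theta$ in the energy norm; the resulting internal-variable error equation couples only to $\varpi^{n+1}+\varpi^n$ and carries the Crank--Nicolson consistency error $E_q(t):=\tfrac12(\dot\zeta_q(t+\Delta t)+\dot\zeta_q(t))-\Delta t^{-1}(\zeta_q(t+\Delta t)-\zeta_q(t))$, whose regularity is inherited from that of $u$.

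For the energy step I would follow the \textbf{(P2)} test-function choice, taking $v=\varpi^{n+1}+\varpi^n$ in the first equation and $v=(\Upsilon_q^{n+1}+\Upsilon_q^n)/(\tau_q\varphi_q)$ in the $q$-th internal-variable equation. In the first I substitute relation \eqref{r2} to convert $\tfrac{\varphi_0}{2}\Enorm{\chi^{n+1}+\chi^n}{\varpi^{n+1}+\varpi^n}$ into the telescoping term $\tfrac{\varphi_0}{\Delta t}(\enorm{\chi^{n+1}}^2-\enorm{\chi^n}^2)$ plus a remainder $\varphi_0\Enorm{\chi^{n+1}+\chi^n}{\mathcal{E}_2^n+\mathcal{E}_3^n}$. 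By symmetry of $a(\cdot,\cdot)$ the coupling $\tfrac12\sum_q\Enorm{\Upsilon_q^{n+1}+\Upsilon_q^n}{\varpi^{n+1}+\varpi^n}$ cancels against the corresponding term produced by the internal-variable equations. Summing the combined identity over $n=0,\ldots,m-1$ and multiplying by $\Delta t$ telescopes the $\varpi$, $\chi$ and $\Upsilon_q$ energies and produces the $\enorm{\Upsilon_q^{n+1}+\Upsilon_q^n}^2$ dissipation sum; using $\enorm{\chi^0}=0$, $\Upsilon_q^0=0$ and $\llnorm{\varpi^0}\le\llnorm{\dot\theta^0}$ leaves on the left exactly the quantities in the statement and on the right only inner products of the unknowns against $\dot\theta^{n+1}-\dot\theta^n$ and the consistency errors $\mathcal{E}_1^n,\mathcal{E}_2^n,\mathcal{E}_3^n,E_q^n$.

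The hard part, precisely as in Theorem~\ref{thm:fully:stability:p1}, is to bound this right-hand side without Gr\"onwall's inequality, so that no $e^T$ factor arises. The device is to avoid ever forming a time-sum $\sum_n\llnorm{\varpi^n}^2$ or $\sum_n\enorm{\chi^n}^2$. For a typical term such as $\rho\sum_n\Lnorm{\dot\theta^{n+1}-\dot\theta^n}{\varpi^{n+1}+\varpi^n}$ I would bound $\llnorm{\varpi^{n+1}+\varpi^n}\le2\max_k\llnorm{\varpi^k}$, write $\dot\theta^{n+1}-\dot\theta^n=\int_{t_n}^{t_{n+1}}\ddot\theta\,dt$, and apply the discrete Cauchy--Schwarz inequality in time to extract a factor $\sqrt{T}$, obtaining $C\sqrt{T}\max_k\llnorm{\varpi^k}\,\Llnorm{\ddot\theta}$; a Young's inequality with a fixed, $T$-independent parameter then leaves a $\max_k\llnorm{\varpi^k}^2$ contribution with a small coefficient that is absorbed into the left once the maximum over $m$ is taken. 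The consistency couplings involving $\mathcal{E}_1^n$ and $E_q^n$ are handled identically and contribute $O(\sqrt{T}\,\Delta t^2)$, while the mixed terms $\Enorm{\chi^{n+1}+\chi^n}{\mathcal{E}_2^n+\mathcal{E}_3^n}$ are products of one spatial and one temporal smallness and are therefore of strictly higher order. Inserting the projection estimates of Lemma~\ref{lemma:estimate:elliptic} then yields the first bound, with the $h^{r-1}$ rate coming from $\llnorm{\dot\theta}$ and $\llnorm{\ddot\theta}$.

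Finally, for the elliptic-regularity version I would observe that the spatial rate of the whole estimate is limited only by these $L_2$ projection errors, since every energy-norm projection error enters solely through the higher-order mixed terms; invoking \eqref{eq:ellipticRegularity:H2} to upgrade $\llnorm{\dot\theta},\llnorm{\ddot\theta}$ to $O(h^{r})$ (via the improved bound in Lemma~\ref{lemma:estimate:elliptic}) then replaces $h^{r-1}$ by $h^{r}$ throughout and gives the second estimate.
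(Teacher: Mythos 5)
Your proposal is correct and rests on the same machinery as the paper's proof (elliptic-projection splitting, Crank--Nicolson consistency errors, and the Gr\"onwall-free max/Young absorption of Theorem~\ref{thm:fully:stability:p1}), but your energy pairing is genuinely different. The paper tests the momentum error equation with $v=(\chi^{n+1}-\chi^n)/\Delta t$ and the internal-variable equations with $v=(\Upsilon_q^{n+1}+\Upsilon_q^n)/2$, repeating the pattern of Lemma~\ref{lemma:estimate:p1} and spending \eqref{r2} on the inertia term; its cross terms cancel through the $\chi$ difference quotient and its remainders are $\Lnorm{\dot\theta^{n+1}-\dot\theta^n}{\mathcal{E}_2^n+\mathcal{E}_3^n}$, $\Delta t\Lnorm{\mathcal{E}_1^n}{\mathcal{E}_2^n+\mathcal{E}_3^n}$ and $\Delta t\,\Enorm{\mathcal{E}_3^n}{\Upsilon_q^{n+1}+\Upsilon_q^n}$. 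You instead test with $v=\varpi^{n+1}+\varpi^n$ and $v=(\Upsilon_q^{n+1}+\Upsilon_q^n)/(\tau_q\varphi_q)$, mirroring the stability proof of Theorem~\ref{thm:fully:stability:p2} and spending \eqref{r2} on the stiffness term, which makes the cross-cancellation immediate by symmetry of $a(\cdot,\cdot)$; both routes produce equivalent telescoping identities and the same final bounds. Two cautions about your remainder $\Delta t\sum_n\Enorm{\chi^{n+1}+\chi^n}{\mathcal{E}_2^n+\mathcal{E}_3^n}$, which is the one term the paper's pairing does not generate. First, its $\mathcal{E}_3$ part carries no spatial factor at all, so it is not a product of spatial and temporal smallness and is not of higher order: it is an unknown-times-consistency term of exactly the same order, $O(\sqrt{T}\,\Delta t^2\max_k\enorm{\chi^{k}})$, as your $\mathcal{E}_1^n$ and $E_q^n$ terms, rescued by the $\Delta t$ prefactor inherited from multiplying the identity by $\Delta t$ and absorbed by the same Young step. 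Second, its $\mathcal{E}_2$ part involves the \emph{energy} norm $\enorm{\mathcal{E}_2^n}\sim\Delta t^{3/2}h^{r-1}$, which does not improve under \eqref{eq:ellipticRegularity:H2}; the $h^{r}$ upgrade still goes through, but because this contribution is $O(\Delta t^2h^{r-1})$ and hence dominated by $\Delta t^2$, not because energy-norm projection errors appear only in negligible places. By contrast, the paper's choice of test functions keeps every projection error inside $L_2$ inner products, which is precisely what makes its elliptic-regularity upgrade from $h^{r-1}$ to $h^{r}$ immediate.
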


\begin{proof}
The proof follows similar steps to that of Lemma \ref{lemma:estimate:p1} and so we don't need to give all of the details. First, taking averages at $t_{n+1}$ and $t_n$ of
\eqref{weak:p2e1} and \eqref{weak:p2e2}, and subtracting from \eqref{p2f1} and
\eqref{p2f2} with the test functions
$v = (\chi^{n+1} - \chi^n) / \Delta t\in V^h$ and 
$v=(\Upsilon_q^{n+1}+\Upsilon_q^n)/2\in V^h$,
we have for any $0\leq n\leq N-1$ that,
\begin{align}
\frac{\rho}{2}&\left(\llnorm{\varpi^{n+1}}^2-\llnorm{\varpi^n}^2\right)+\frac{\varphi_0}{2}\left(\enorm{\chi^{n+1}}^2-\enorm{\chi^n}^2\right)\nonumber\\\nonumber&+\frac{1}{2{}}\sum_{q=1}^{N_\varphi}\frac{1}{\varphi_q }\left(\enorm{\Upsilon_q^{n+1}}^2-\enorm{\Upsilon_q^n}^2\right)+\frac{\Delta t}{2{}}\sum_{q=1}^{N_\varphi}\enorm{\Upsilon_q^{n+1}+\Upsilon_q^n}^2\\
=&\frac{\rho}{2}\Lnorm{\dot\theta^{n+1}-\dot\theta^n}{\varpi^{n+1}+\varpi^n}+\frac{\rho}{2}{\Delta t}\Lnorm{\mathcal{E}_1^n}{\varpi^{n+1}+\varpi^n}\nonumber\\&-{\rho}\Lnorm{\dot\theta^{n+1}-\dot\theta^n}{\mathcal{E}_2^n}-{\rho}\Lnorm{\dot\theta^{n+1}-\dot\theta^n}{\mathcal{E}_3^n}-{\rho}{\Delta t}\Lnorm{\mathcal{E}_1^n}{\mathcal{E}_2^n}\nonumber\\&-{\rho}{\Delta t}\Lnorm{\mathcal{E}_1^n}{\mathcal{E}_3^n}+\frac{\Delta t}{2}\sum_{q=1}^{N_\varphi}\frac{1}{\varphi_q}\Enorm{E^n_q}{\Upsilon_q^{n+1}+\Upsilon_q^n}-\frac{\Delta t}{2}\sum_{q=1}^{N_\varphi}\frac{1}{\varphi_q}\Enorm{\mathcal{E}_3^n}{\Upsilon_q^{n+1}+\Upsilon_q^n}\label{p2fe3}
\end{align}
where
\begin{align*}
\mathcal{E}_1(t):=\frac{\ddot u(t+\Delta t)+\ddot u(t)}{2}-\frac{\dot u(t+\Delta t)-\dot u(t)}{\Delta t},\ 
    \mathcal{E}_2(t):=\frac{\dot \theta(t+\Delta t)+\dot \theta(t)}{2}-\frac{\theta(t+\Delta t)-\theta(t)}{\Delta t},\\
    \mathcal{E}_3(t):=\frac{u(t+\Delta t)-u(t)}{\Delta t}-\frac{\dot u(t+\Delta t)+\dot u(t)}{2},\
    E_q(t):=\frac{\dot\zeta(t+\Delta t)+\dot\zeta(t)}{2}-\frac{\zeta(t+\Delta t)-\zeta(t)}{\Delta t},
\end{align*}for each $q$. The remainder of the proof can be completed in 
much the same way as that of Lemma \ref{lemma:estimate:p1}, with a careful choice
of the `$\epsilon$' parameter in the Young's inequalities.
\end{proof}

\begin{theorem}\label{thm:error:p2}
Suppose that $u\in H^4(0,T;H^{s_2}(\Omega))\cap C^1(0,T;H^{s_2}(\Omega))$, 
then we have
\begin{align*}
\max_{0\leq k\leq N}\enorm{u(t_k)-Z^k_h}+
\max_{0\leq k\leq N}\llnorm{\dot u(t_k)-W^k_h}\leq CT\lVert u\rVert_{H^4(0,T;H^{s_2}(\Omega))}(h^{r-1}+\Delta t^2)
\end{align*}
for some positive $C$ that is independent of $h$, $\Delta t$, and the exact and
numerical solutions but dependent on $\rho$, $D$, $\Omega$, $\partial \Omega$, $T$
and the internal variable coefficients. In addition,
\begin{align*}
\max_{0\leq k\leq N}\llnorm{\dot u(t_k)-W^k_h}\leq CT\lVert u\rVert_{H^4(0,T;H^{s_2}(\Omega))}(h^{r}+\Delta t^2)
\end{align*}
if elliptic regularity, \eqref{eq:ellipticRegularity:H2}, can be assumed.
\end{theorem}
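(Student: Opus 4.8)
The plan is to mirror the proof of Theorem~\ref{thm:error:p1}, simply replacing the role of Lemma~\ref{lemma:estimate:p1} by its velocity-form analogue, Lemma~\ref{lemma:estimate:p2}. I would use the error splitting introduced just above that lemma, so that $e_h^k = u(t_k)-Z_h^k = \theta^k-\chi^k$ and $\tilde e_h^k = \dot u(t_k)-W_h^k = \dot\theta^k-\varpi^k$. The triangle inequality then reduces everything to two ingredients: the elliptic-projection errors $\theta^k,\dot\theta^k$, controlled by Lemma~\ref{lemma:estimate:elliptic}, and the discrete components $\chi^k,\varpi^k$, controlled by Lemma~\ref{lemma:estimate:p2}. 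The internal-variable terms $\Upsilon_q^k$ appearing in Lemma~\ref{lemma:estimate:p2} play no role in the final statement and are simply discarded.

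Concretely, from Lemma~\ref{lemma:estimate:p2} I would first extract
\[
\max_{0\le k\le N}\llnorm{\varpi^k}+\max_{0\le k\le N}\enorm{\chi^k}\le CT\lVert u\rVert_{H^4(0,T;H^{s_2}(\Omega))}(h^{r-1}+\Delta t^2).
\]
Combining this with the energy- and $L_2$-projection bounds $\enorm{\theta^k}\le C|u^k|_{H^r(\Omega)}h^{r-1}$ and $\llnorm{\dot\theta^k}\le C|\dot u^k|_{H^r(\Omega)}h^{r-1}$ from Lemma~\ref{lemma:estimate:elliptic}, and absorbing the pointwise-in-time seminorms into $\lVert u\rVert_{H^4(0,T;H^{s_2}(\Omega))}$ (valid since $u\in C^1(0,T;H^{s_2}(\Omega))$), the triangle inequality gives $\enorm{e_h^k}\le\enorm{\theta^k}+\enorm{\chi^k}$ and $\llnorm{\tilde e_h^k}\le\llnorm{\dot\theta^k}+\llnorm{\varpi^k}$, each bounded by $CT\lVert u\rVert_{H^4(0,T;H^{s_2}(\Omega))}(h^{r-1}+\Delta t^2)$. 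Taking maxima over $k$ yields the first assertion.

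For the sharpened velocity bound under elliptic regularity, I would invoke the second conclusion of Lemma~\ref{lemma:estimate:p2}, which upgrades the discrete bound to $\max_k\llnorm{\varpi^k}\le CT\lVert u\rVert_{H^4(0,T;H^{s_2}(\Omega))}(h^{r}+\Delta t^2)$, together with the improved $L_2$ projection estimate $\llnorm{\dot\theta^k}\le Ch^{r}|\dot u^k|_{H^r(\Omega)}$ from Lemma~\ref{lemma:estimate:elliptic}. Writing $\llnorm{\tilde e_h^k}\le\llnorm{\dot\theta^k}+\llnorm{\varpi^k}$ and taking the maximum over $k$ then gives the $O(h^{r}+\Delta t^2)$ velocity estimate.

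Since this is entirely a triangle-inequality assembly of two already-proved lemmas, I do not expect any real obstacle; the only subtlety worth flagging is that the energy-norm bound for $e_h^k=u(t_k)-Z_h^k$ stays at $O(h^{r-1})$ even when elliptic regularity holds, because the projection error $\enorm{\theta^k}$ is only $O(h^{r-1})$ in the energy norm. This is why no improvement of the $\enorm{u(t_k)-Z_h^k}$ term is claimed, and why the regularity hypothesis improves only the $L_2$ velocity error, exactly as in the displacement case.
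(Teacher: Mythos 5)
Your proposal is correct and follows essentially the same route as the paper: the paper's proof also just invokes Lemma~\ref{lemma:estimate:p2} for the discrete components $\varpi^k,\chi^k$, Lemma~\ref{lemma:estimate:elliptic} for the projection errors, and the triangle inequality on the splitting $e_h^k=\theta^k-\chi^k$, $\tilde e_h^k=\dot\theta^k-\varpi^k$, exactly mirroring Theorem~\ref{thm:error:p1}. Your closing remark about why the energy-norm bound cannot improve under elliptic regularity is also consistent with the paper, which defers the improved $L_2$ bound on $u(t_k)-Z_h^k$ to Corollary~\ref{cor:error:p2}.
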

\begin{proof}
Proceeding similarly to the proof of Theorem~\ref{thm:error:p1},
by Lemma \ref{lemma:estimate:p2} we have
\[
\max_{0\leq k\leq N}\llnorm{\varpi^{k}}+ \max_{0\leq k\leq N}\enorm{\chi^{k}}\leq CT\lVert u\rVert_{H^4(0,T;H^{s_2}(\Omega))}(h^{r-1}+\Delta t^2)
\]
and then Lemmas~\ref{lemma:estimate:elliptic} and~\ref{lemma:estimate:p2}, and the triangle inequality give,
%
\begin{align*}
    \max_{0\leq k\leq N}\enorm{e^k_h}
    +\max_{0\leq k\leq N}\llnorm{\tilde e^k_h}
    \leq CT\lVert u\rVert_{H^4(0,T;H^{s_2}(\Omega))}(h^{r-1}+\Delta t^2).
\end{align*}
Further, if elliptic regularity holds
\begin{align*}
    \max_{0\leq k\leq N}\llnorm{\tilde e^k_h}
    \leq&CT\lVert u\rVert_{H^4(0,T;H^{s_2}(\Omega))}(h^{r}+\Delta t^2).
\end{align*}
This completes the proof.
\end{proof}

In analogy to Corollary~\ref{cor:error:p1} we can also show an improved estimate.

\begin{corollary}\label{cor:error:p2}
Under the same conditions as \textnormal{Theorem \ref{thm:error:p2}}, if 
elliptic regularity, \eqref{eq:ellipticRegularity:H2}, holds then,
\begin{align*}
\max_{0\leq k\leq N}\llnorm{u(t_k)-Z^k_h}\leq CT\lVert u\rVert_{H^4(0,T;H^{s_2}(\Omega))}(h^{r}+\Delta t^2),
\end{align*}
for a positive constant $C$ with the same qualities as above.
\end{corollary}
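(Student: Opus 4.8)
The plan is to follow exactly the argument used for Corollary~\ref{cor:error:p1}, since the velocity form uses the same error-splitting notation $e_h^n = u^n - Z_h^n = \theta^n - \chi^n$ and the same elliptic projection $R$. First I would bound the $L_2$ error by the triangle inequality,
\[
\llnorm{u(t_n)-Z^n_h}=\llnorm{e^n_h}\leq\llnorm{\theta^n}+\llnorm{\chi^n},
\]
which reduces the task to estimating the projection error $\theta^n$ and the discrete error component $\chi^n$ separately in the $L_2$ norm.

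For the projection term I would invoke Lemma~\ref{lemma:estimate:elliptic} under the elliptic-regularity hypothesis, which upgrades the generic $L_2$ bound by one power of $h$ to give $\llnorm{\theta^n}\leq C\lvert u^n\rvert_{H^r(\Omega)}h^{r}$. The component $\chi^n$ is not separately controlled in $L_2$, so the key step is to pass through the energy norm using coercivity, $\llnorm{\chi^n}\leq\frac{1}{\sqrt{\kappa}}\enorm{\chi^n}$, and then apply the \emph{improved} (elliptic-regularity) estimate of Lemma~\ref{lemma:estimate:p2}, which supplies $\enorm{\chi^n}\leq CT\lVert u\rVert_{H^4(0,T;H^{s_2}(\Omega))}(h^{r}+\Delta t^2)$.

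Combining these two bounds and taking the maximum over $0\leq k\leq N$ would then deliver the claimed $O(h^{r}+\Delta t^2)$ rate. I do not expect a genuine obstacle here: the delicate work has already been carried out in producing the sharp energy-norm bound for $\chi^n$ in Lemma~\ref{lemma:estimate:p2}, so the corollary is a short deduction. The one point requiring care is that the improved $h^{r}$ rate for $\enorm{\chi^n}$ depends on elliptic regularity (entering via the Aubin--Nitsche duality argument behind Lemma~\ref{lemma:estimate:elliptic}); this is precisely why the sharper $L_2$ estimate for $u(t_k)-Z^k_h$ is available only under that hypothesis, whereas the energy-norm error $\enorm{e_h^n}$ remains limited to $O(h^{r-1})$ by the projection error $\enorm{\theta^n}$.
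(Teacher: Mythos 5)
Your proposal is correct and is essentially the paper's own argument: the paper proves Corollary~\ref{cor:error:p2} by repeating the proof of Corollary~\ref{cor:error:p1} (triangle inequality, the elliptic-regularity bound $\llnorm{\theta^n}\leq C\lvert u^n\rvert_{H^r(\Omega)}h^{r}$ from Lemma~\ref{lemma:estimate:elliptic}, and coercivity $\llnorm{\chi^n}\leq\kappa^{-1/2}\enorm{\chi^n}$) with the velocity-form estimate substituted for the displacement-form one, exactly as you do. If anything, your citation of Lemma~\ref{lemma:estimate:p2} for the bound on $\enorm{\chi^n}$ is more precise than the paper's reference to Theorem~\ref{thm:error:p2}, whose stated bounds concern the full errors rather than the component $\chi^n$.
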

\begin{proof}
The proof is parallel to that of Corollary \ref{cor:error:p1} but instead of using the
result from Theorem \ref{thm:error:p1}, we use Theorem \ref{thm:error:p2} instead.
\end{proof}

\section{Numerical experiments}\label{sec:numexp}\noindent
In this section we give some evidence that the convergence rates given in the theorems
above are realised in practice, at least for model problems for which an exact solution
can be generated. The tabulated results in this section can be reproduced by the
python scripts (using FEniCS, \cite{alnaes2015fenics}, 
\texttt{https://fenicsproject.org/}) at
\texttt{https://github.com/Yongseok7717} or by pulling and running a custom 
docker container as follows (at a \texttt{bash} prompt):
\begin{verbatim}
docker pull variationalform/fem:yjcg1
docker run -ti variationalform/fem:yjcg1
cd ./2019-11-26-codes/mainTable/
./main_Table.sh
\end{verbatim}
The run may take around 30 minutes or longer depending on the host machine.

Let the exact solution to \eqref{eq:momeq} -- \eqref{eq:initial:velo} be
\[
u(x,y,t)=e^{-t}\sin(xy)\in C^\infty(0,T;C^\infty(\Omega))
\]
where $\Omega$ is the unit square, $(0,1)\times(0,1)$, and $T=1$.
The Dirichlet boundary condition is given by
$u=0$ if $x=0$ or $y=0$ (for all $t$),
with Neumann data on the remainder of $\partial\Omega$.
Furthermore, we take two internal variables with $\varphi_0=0.5,\ \varphi_1=0.1,\ \varphi_2=0.4,\    \tau_1=0.5,\ \tau_2=1.5$, and set $\rho=D=1$.
The internal variables, the source term $f$ and the Neumann boundary term $g_N$
are all determined by inserting the exact solution above into the governing equations.

From the error estimates, regardless of whether we use the displacement or velocity form
of the internal variables, for this smooth solution we can expect that
$\enorm{ e_h^N}$, $\llnorm{\tilde e_h^N}$, $\llnorm{ e_h^N}$, respectively, are of 
optimal orders $O(h^{s_1}+\Delta t^2)$,
$O(h^{s_1+1}+\Delta t^2)$, $O(h^{s_1+1}+\Delta t^2)$ respectively (because $s_2=\infty$).
In other words, the convergence rate with respect to time is fixed at second order but the spatial convergence order depends on the degree of polynomials $s_1$ used in the finite element space $V^h$.

If we take $\Delta t\sim h$ then we will expect for our errors that
\[
\enorm{ e_h^N}=O(h^{\min(s_1,2)}), \qquad\llnorm{\tilde e_h^N},\llnorm{ e_h^N}=O(h^{\min(s_1+1,2)}).
\]
In the computational results that follow, the numerical convergence rate, $d_c$,
is estimated by
\[
d_c\ =\ \frac{\log(\text{error of }h_1)-\log(\text{error of }h_2)}{\log(h_1)-\log(h_2)}.
\]
We can see this in Figure~\ref{fig:convergentOrder} where on the left we give
results for a piecewise linear basis ($s_1=1$) and on the right hand for a quadratic basis
($s_1=2$). The convergence rate is given by the gradients of the lines.
With linears the energy errors have first order accuracy but the $L_2$ errors show optimal second order. On the other hand, for quadratics, we can observe second order rates 
for all quantities because the time-error convergence order is fixed at $2$.
\begin{figure}[H]
	\centering
	\centering
	\includegraphics[width=0.45\textwidth]{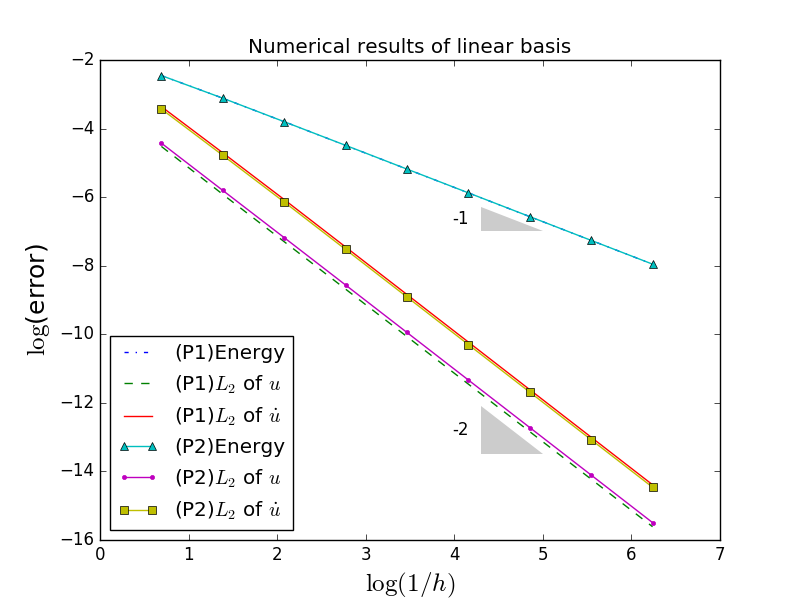}
	\includegraphics[width=0.45\textwidth]{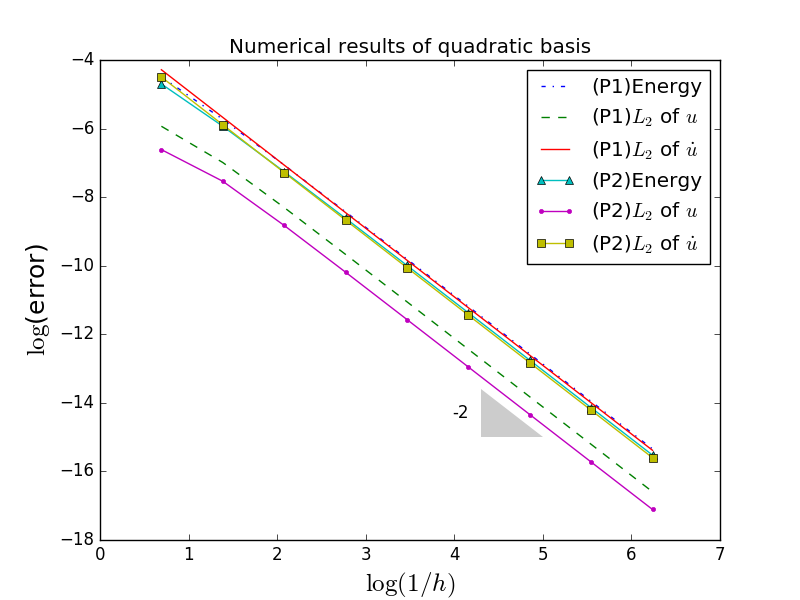}
	\caption{Numerical convergent order: \textbf{linear} (left) and \textbf{quadratic} (right) polynomial basis}
	\label{fig:convergentOrder}
\end{figure}
To see the higher order spatial convergence for the quadratic case we take a much
smaller $\Delta t$ so that we can consider the time error to be negligible in that,
\[
\enorm{ e_h^N}\approx O(h^2)\textrm{ and } 
\llnorm{\tilde e_h^N},
\llnorm{ e_h^N}\approx O(h^{3}).
\]
The results are shown in Table \ref{table:fixedTime}.
In the same way, if we make the spatial error negligible, we can observe the temporal error
convergence rate --- and this is given in Table \ref{table:fixed_h}. We see 
that the rates are optimal in all cases.

\begin{table}[H]
	\centering
	\begin{tabular}{|c||ccc|ccc|}
		\hline
		\multirow{2}{*}{$h$}&\multicolumn{3}{c|}{\textbf{Displacement form}}&\multicolumn{3}{c|}{\textbf{Velocity form}}\\		
		&$\enorm{ e_h^N}$&$\llnorm{\tilde e_h^N}$&$\llnorm{ e_h^N}$&$\enorm{ e_h^N}$&$\llnorm{\tilde e_h^N}$&$\llnorm{ e_h^N}$\\
		\hline
		\hline
		1/4&2.2557E{-3}&8.1101E{-5}&6.9417E{-5}&2.2557E{-3}&8.1098E{-5}&6.9419E{-5}\\
		1/8&6.0301E{-4}&1.0491E{-5}&9.2260E{-6}&6.0301E{-4}&1.0489E{-5}&9.2266E{-6}\\
		1/16&1.5566E{-4}&1.2803E{-6}&1.1954E{-6}&1.5566E{-4}&1.2794E{-6}&1.1957E{-6}\\
		1/32&3.9526E{-5}&1.6460E{-7}&1.5240E{-7}&3.9526E{-5}&1.6270E{-7}&1.5226E{-7}\\
		\hline
		rate&1.93&    2.99&    2.93&    1.93&    2.99&    2.93\\
		\hline
	\end{tabular}
	\caption{Fixed time step size errors when $s_1=2$ and $\Delta t=1/1200$}\label{table:fixedTime}
\end{table}

\begin{table}[H]
	\centering
	\begin{tabular}{|c||ccc|ccc|}
		\hline
		\multirow{2}{*}{$\Delta t$}&\multicolumn{3}{c|}{\textbf{Displacement form}}&\multicolumn{3}{c|}{\textbf{Velocity form}}\\		
		&$\enorm{ e_h^N}$&$\llnorm{\tilde e_h^N}$&$\llnorm{ e_h^N}$&$\enorm{ e_h^N}$&$\llnorm{\tilde e_h^N}$&$\llnorm{ e_h^N}$\\
		\hline
		\hline
1/8& 6.0705E-04& 8.5271E-04& 2.4904E-04& 3.6453E-04& 6.8608E-04& 1.4780E-04 \\
1/16& 1.5316E-04& 2.1327E-04& 6.3192E-05& 9.2174E-05& 1.7163E-04& 3.7643E-05 \\
1/32& 3.8373E-05& 5.3325E-05& 1.5856E-05& 2.3105E-05& 4.2915E-05& 9.4542E-06 \\
1/64& 9.5993E-06& 1.3332E-05& 3.9677E-06& 5.7818E-06& 1.0729E-05& 2.3663E-06 \\
		\hline
		rate& 1.99 &   2.00 &   1.99  &  1.99 &   2.00  &  1.98
		\\
		\hline
	\end{tabular}
	\caption{Fixed spatial mesh size errors when $s_1=2$ and $h=1/512$}\label{table:fixed_h}
\end{table}
In summary, both the displacement form and the velocity form display numerical
results consistent with the given error bounds. This is true for the $L_2(\Omega)$
estimates even though the elliptic regularity estimate \eqref{eq:ellipticRegularity:H2}
is usually only relied upon for homogeneous Dirichlet problems.
We included a Neumann boundary condition here for generality but the
code could easily be altered to the pure Dirichlet case to conform to the standard requirements for
elliptic regularity, although we note that in
\cite[Chapter 4.3]{grisvard2011elliptic} and \cite{CostabelDaugeNicaise2012} there are
discussions of elliptic regularity for problems where both Dirichlet and Neumann
boundary conditions are present.
In any event, our intention was simply to demonstrate that the optimal
rates are in fact achieved in practice.

\section{Conclusions}\label{sec:concs}
Our two fully discrete formulations demonstrate optimal energy and $L_2$ spatial error 
estimates, and second order temporal error estimates, both in theory, and
in numerical tests. We took the usual step in assuming ideal conditions for the
proofs although in practical problems one cannot always expect such optimality.
For example, if we had lower spatial regularity due, say, to corner singularities we
would expect the energy estimates to be of the order $O(h^{\min(s_1+1,s_2)-1}+\Delta t^2)$
where the specific values of the exponents would depend on the geometry and strength of
the singularity. Also, with such reduced regularity we would be unlikely to have the
necessary elliptic regularity for a higher order $L_2$ estimate.

If, on the other hand, the regularity in
time was reduced then we would expect to see $\Delta t^2$ replaced by $\Delta t^\beta$,
for $\beta < 2$, in the above. This may stem from the loads $f$ and $g$ being non-smooth
in time, possibly even discontinuous in some applications (we can think of intermittent hammer blows on a structure for example). Our assumptions on the temporal behaviour
of $f$ and $g$ are quite strong, and these allowed us to circumvent the use of
Gr{\"o}nwall's lemma. Although it would be an interesting to see how these
assumptions could be relaxed while retaining the sharper bounds, we have no choice here
but to defer this to a later study.

This scalar, or antiplane shear, problem studied above can can be straighforwardly
elevated to a vector-valued problem representing dynamic linear viscoelasticity by making
some notational changes and using product Hilbert spaces. For details see, for example,
\cite{DGV,Shaw11i} but, in brief, for this we would define the
Cauchy infinitesimal strain tensor
${{\varepsilon}}_{ij}(\boldsymbol{u})
=\frac{1}{2}\left(\frac{\partial u_i}{\partial x_j}+\frac{\partial u_j}{\partial x_i}\right)$ for $i,j=1,\ldots,d$, $d=2$ or $3$ and where $\boldsymbol{u}$ is the displacement vector.
The tensor $\ushort{\boldsymbol\varepsilon}(\boldsymbol{u})$ then plays the 
role of $\nabla u$. We then replace $D$ with $\ushort{\boldsymbol D}$,
a symmetric positive definite fourth order tensor, and the stress tensor
$\ushort{\boldsymbol{\sigma}}$ is given in direct analogy to the scalar analogues in
\eqref{eq:dispconlaw} and \eqref{eq:veloconlaw}.
The resulting variational formulation uses the symmetric bilinear form
$a(\boldsymbol{w},\boldsymbol{v})=\Lnorm{\ushort{\boldsymbol D} \ushort{\boldsymbol{\varepsilon}}(\boldsymbol{w})} {\ushort{\boldsymbol{\varepsilon}}(\boldsymbol{v})}$ for $\boldsymbol{w},\boldsymbol{v}\in \boldsymbol{V}$ where \mbox{$\boldsymbol{V}=\{\boldsymbol{v}\in[H^1(\Omega)]^d\ |\ \boldsymbol v(\boldsymbol{x})=0 \textrm{ on }\Gamma_D\}$}. The coercivity of this form follows from Korn's inequality (e.g. \cite{ciarlet2010korn,horgan1983inequalities,nitsche1981korn,MTE}), and continuity follows from the Cauchy-Schwarz inequality. Internal variables can be defined by exact analogy with those
above, and continous and discrete variational problems can similarly posed. The stability
and error analyses then go through in the same way as above with the help of 
the elasticity theory estimates in \cite{brenner1992linear,MTE,amara1979equilibrium},
and we will obtain similarly optimal bounds without the use of Gr\"onwall's inequality.

\bibliography{ref}

\bibliographystyle{elsarticle-num}

\end{document}